\def\R{\textrm{I\kern-0.21emR}}
\def\N{\textrm{I\kern-0.21emN}}
\def\O{{\Omega}}
\def\Re{\operatorname{Re}}
\newcommand{\C} {\mathbb{C}}
\renewcommand{\geq}{\geqslant}
\renewcommand{\leq}{\leqslant}
\newtheorem{theorem}{Theorem}
\newtheorem{lemma}{Lemma}
\theoremstyle{definition}
\theoremstyle{definition}\newtheorem{remark}{Remark}
\newcommand{\Hun}{\mathbf{(H_1)}}
\newcommand{\Hdeux}{\mathbf{(H_{\boldsymbol{L}})}}
\newcommand{\Htrois}{\mathbf{(H_2)}}
\title{Large-time optimal observation domain for linear parabolic systems}
\author{Idriss Mazari-Fouquer\footnote{CEREMADE, UMR CNRS 7534, Universit\'e Paris-Dauphine, Universit\'e PSL, Place du Mar\'echal De Lattre De Tassigny, 75775 Paris cedex 16, France (\texttt{mazari@ceremade.dauphine.fr}).}
\and
Yannick Privat\footnote{Universit\'e de Lorraine, CNRS, Inria, IECL, F-54000 Nancy, France (\texttt{yannick.privat@univ-lorraine.fr}).}~\footnote{Institut Universitaire de France (IUF).}
	\and Emmanuel Tr\'elat\footnote{Sorbonne Universit\'e, CNRS, Universit\'e Paris Cit\'e, Inria, Laboratoire Jacques-Louis Lions (LJLL), F-75005 Paris, France (\texttt{emmanuel.trelat@sorbonne-universite.fr}).} 
}
\date{}
\begin{document}

\maketitle

\begin{abstract}
Given a well-posed linear evolution system settled on a domain $\Omega$ of $\R^d$, an observation subset $\omega\subset\Omega$ and a time horizon $T$, the observability constant is defined as the largest possible nonnegative constant such that the observability inequality holds for the pair $(\omega,T)$.
In this article we investigate the large-time behavior of the observation domain that maximizes the observability constant over all possible measurable subsets of a given Lebesgue measure. We prove that it converges exponentially, as the time horizon goes to infinity, to a limit set that we characterize.
The mathematical technique is new and relies on a quantitative version of the bathtub principle.
\end{abstract}

\noindent\textbf{Keywords:} parabolic equations, observability, shape optimization, quantitative bathtub principle.

\medskip

\noindent\textbf{AMS classification:} 93B07, 35L05, 49K20, 42B37.

\section{Introduction}\label{secintro}

Let $d\in\N^*$ and let $\Omega$  be an open bounded smooth connected subset of $\R^d$.
Let $q\in\N^*$. We consider the evolution system
\begin{equation}\label{heatEq}
\partial_t y+A_{0} y=0
\end{equation}
where $-A_{0}:D(A_0)\rightarrow L^2(\Omega,\C^q)$ is a densely defined operator generating a strongly continuous semigroup on $L^2(\Omega,\C^q)$.
Given any $y^0\in D(A_{0})$, there exists a unique solution $y\in C^0([0,+\infty),D(A_{0}))\cap C^1((0,+\infty),L^2(\Omega,\C^q))$ of \eqref{heatEq} such that $y(0,\cdot)=y^0(\cdot)$.

For an evolution system like \eqref{heatEq}, it is often required in practice, for instance in engineering problems, to reconstruct initial or final data of solutions, based on partial measurements performed on a subset $\omega$ of $\Omega$ over a a time horizon $T$. This inverse problem is feasible as soon as the system is observable on $\omega$ in time $T$, which is mathematically modelled by an \textit{observability inequality}, as follows.
For any measurable subset $\omega$ of $\Omega$ and any $T>0$, the system \eqref{heatEq} is said to be \textit{observable} on $\omega$ in time $T$ if there exists $C>0$ such that
\begin{equation}\label{ineqobs}
C \Vert y(T,\cdot)\Vert_{L^2(\Omega,\C^q)}^2
\leq \int_0^T\!\!\!\int_\omega \vert y(t,x)\vert^2 \,dx \, dt
\end{equation}
for every solution of \eqref{heatEq} such that $y(0,\cdot)\in D(A_{0})$. 
In \eqref{ineqobs}, $\vert y\vert$ is the Hermitian norm of $y\in\C^q$.

Many results have been established in the literature regarding observability properties for some classes of partial differential equations. Since, in this article, we are going to focus on the case of \textit{parabolic} equations (see Section \ref{sec:mainHyp} for assumptions on $A_0$), here we simply quote few of them. For the heat equation, that is, when $-A_0$ is the Dirichlet Laplacian, observability holds true for any $T>0$ and any subset of positive Lebesgue measure (see \cite{Fursikov, LebeauRobbiano} for open subsets and see \cite{apraiz2014observability} for the extension to measurable subsets).
The same holds true for anomalous diffusion equations, that is, when $A_0$ is some power $\alpha>1/2$ of the Dirichlet Laplacian (see \cite{Miller}), and for the Stokes equation (see \cite[Lemma 1]{Puel} and \cite{CanZhang}). We also refer to \cite[Chapter 9]{TucsnakWeiss} for some general results establishing observability for parabolic systems.

The \emph{observability constant} is defined as the largest possible nonnegative constant for which the observability inequality \eqref{ineqobs} holds, i.e., by 
\begin{equation}\label{defCT}
C_T(\mathds{1}_\omega)=\inf\left\{ \frac{ \int_0^T\!\!\int_\Omega \mathds{1}_\omega(x)|y(t,x)|^2\,dx \, dt }{\Vert y(T,\cdot)\Vert^2_{L^2(\Omega,\C^q)}} \ \big\vert\  y^0\in D(A_{0}) \setminus\{0\} \right\},
\end{equation}
where $\mathds{1}_\omega$ denotes the characteristic function of $\omega$. Note that \eqref{heatEq} is observable on $\omega$ in time $T$ if and only if $C_T(\mathds{1}_\omega)>0$. 
The observability constant defined by \eqref{defCT} provides an account, in some sense, for the well-posedness of the above-mentioned inverse problem: the larger the constant, the more favorable it is to solve the inverse problem. This is why it is important, in practice, to try to choose the observation subset $\omega$ in an optimal way and, when it is possible, the horizon of time $T$ over which observations are performed.
Of course, in practice, there are limitations on the choice of the subset $\omega$, starting with its Lebesgue measure. This is why, in what follows, we will consider measurable subsets of a given measure.

Let $L\in(0,1)$ be fixed. Given any $T>0$, we consider the optimal design problem
\begin{equation}\label{maxCT}
\boxed{\delta_T = \sup_{\mathds{1}_\omega\in \mathcal{U}_L} C_T(\mathds{1}_\omega)}
\end{equation}
consisting of maximizing the observability constant over the set
\begin{equation}\label{defUL}
\mathcal{U}_L = \{ \mathds{1}_\omega \in L^\infty(\Omega;\{0,1\}) \ \vert\ \omega\ \textrm{is a measurable subset of}\ \Omega \ \textrm{of Lebesgue measure}\ \vert\omega\vert=L\vert\Omega\vert\}.
\end{equation}
This shape optimization problem models the best shape and placement of sensors for the evolution system \eqref{heatEq} in time $T$. If an optimal set $\omega^\star_T$ exists, then it represents the best possible place to install some (adequately shaped) sensors. 

Unfortunately, the shape optimization problem \eqref{defUL} is very difficult to handle, not only because of the complexity of the definition of the observability constant \eqref{defCT}, which is an infimum over all possible solutions of \eqref{heatEq}, but also because of the set of unknowns $\mathcal{U}_L$, which is very large and does not enjoy good compactness properties. Actually, it is not even known if a maximizer $\omega^\star_T$ exists for \eqref{maxCT}. 

To remedy this lack of compactness, some constraints may be added. The search of an optimal set may be restricted to a compact finite-dimensional set, like in \cite{Seinfeld, Morris, MR2743850, MR1832938}, or by adding constraints on the BV norm, on the number of connected components, or anything yielding compactness (see, e.g., \cite{Henrot-Pierre}), or by fixing the initial data, like in \cite{FernandezCaraMunch, MunchPeriago, MR3393270}. However, we want to keep the maximal freedom on the choice of the observation subset and this is why we consider the general set $\mathcal{U}_L$ defined by \eqref{defUL}.

This choice comes at a price: the set $\mathcal{U}_L$ is not even closed for the weak-star topology of $L^\infty(\Omega)$, thus making even the basic question of existence of an optimizer challenging.
Such a question is well known in shape optimization, and this issue is often tackled by considering a relaxed version of the problem.
Here, the relaxation procedure consists of extending the definition of $C_T$ to the $L^\infty$ weak-star closure of $\mathcal{U}_L$, which is the set
\begin{equation*}
\overline{\mathcal{U}}_L = \left\{ a\in L^\infty(\Omega;[0,1])\ \vert\ \int_{\Omega} a(x)\,dx=L\vert\Omega\vert\right\}
\end{equation*}
and to define, in accordance with \eqref{defCT},
\begin{equation}\label{defCTa}
C_T(a)= \inf\left\{ \frac{ \int_0^T\!\int_\Omega a(x)|y(t,x)|^2\,dx \, dt }{\Vert y(T,\cdot)\Vert^2_{L^2(\Omega,\C^q)}} \ \big\vert\  y^0\in D(A_{0}) \setminus\{0\} \right\} \qquad\forall a\in \overline{\mathcal{U}}_L.
\end{equation}
Then, we consider the relaxed shape optimization problem
\begin{equation}\label{maxCTa}\tag{$\mathscr{P}_T$}
\boxed{\bar\delta_T=\sup_{a\in \overline{\mathcal{U}}_L}C_T(a)}
\end{equation}
Now, since $\overline{\mathcal{U}}_L$ is compact for the $L^\infty$ weak-star topology (see \cite[Prop.~7.2.17]{Henrot-Pierre}) and $C_T$ is upper semi-continuous (as an infimum of bounded linear functionals), it follows that Problem~\eqref{maxCTa} has at least one solution $a_T^\star$.
Characterizing $a_T^\star$ is more intricate. In this article, we will study $a_T^\star$ as $T \to \infty$ and also give some results on its small-time asymptotics as $T\rightarrow 0$. 

Of course, we have $\delta_T \leq \bar\delta_T=C_T(a_T^\star)$ but we do not know, in general, whether the inequality may be strict or not. Anyhow, observe the following \emph{a priori} counter-intuitive result: in some cases, the constant density $a\equiv L$ is not a maximizer (see \cite[Prop. 2]{MR3048589}).

To tackle the difficulty of optimizing the observability constant \eqref{defCT}, or its relaxed version \eqref{defCTa}, the point of view adopted in the series of papers
\cite{MR3132418, MR3048589, 
MR3325779, MR3500831, MR3502963, MR3632257, MR3925555}
consisted in considering a \emph{randomized} version $C_{T,{\rm rand}}(a)$ of the observability constant, defined by taking the infimum in \eqref{defCTa} not on all but on \emph{almost all} initial data in an appropriate sense, thus yielding a more tractable expression of the functional, namely, 
\begin{equation}\label{defCTranda}
C_{T,{\rm rand}}(a) = \inf_{j\in\N^*} \gamma_j(T)\int_\Omega a(x)\vert\phi_j(x)\vert^2\, dx ,
\end{equation}
where $(\phi_j)_{j\in\N^*}$ is a Hilbert basis of eigenfunctions of $A_0$ and $(\gamma_j(T))_{j\in\N^*}$ is a sequence of positive coefficients depending on the operator. 
We have $C_{T,{\rm rand}}(a)\geq C_{T}(a)$ for every $a\in \overline{\mathcal{U}}_L$ and the inequality may be strict  \cite[Theorem 1]{MR3325779}. The randomized observability constant $C_{T,{\rm rand}}(a)$ is in some sense ``less pessimistic" than the deterministic observability constant $C_{T}(a)$ which provides an account for the worst observation cases.
From the harmonic analysis point of view, the constant $C_{T,{\rm rand}}(a)$ reflects the independent observations on each mode, with no interaction between them, while the constant $C_{T}(a)$ also takes into account all interactions between modes, through crossed terms in a spectral expansion of solutions.

In the above-mentioned series of papers, the problem of maximizing the randomized (relaxed or not) observability constant has been solved for various classes of evolution equations, showing strong differences between parabolic and hyperbolic cases. The results have revealed a close relationship with asymptotic properties of high frequency eigenfunctions (quantum ergodicity properties). 
Roughly speaking, it has been shown that, in hyperbolic cases, all modes count and a maximizer does not exist in general, while in parabolic cases, there exists a unique maximizing domain which is moreover characterized and computable from a finite number of modes only.

The question of maximizing the deterministic ({\emph{i.e.} without randomization}) observability constant $C_T(a)$ has remained unsolved theoretically, although it has been investigated numerically in 1D (see \cite{bottois2021optimization, munch2009optimal}). 

Although we do not solve this difficult problem, the present article takes us one step further by studying the large-time behavior of the optimal observability constant for parabolic evolution systems and of the associated maximizers. 
It is interesting to note that the authors of \cite{AllaireMunchPeriago} studied the relaxed problem for two-phase heat equations and proved that the relaxed maximizers converge, as $T\rightarrow+\infty$, to relaxed maximizers of the corresponding stationary problem.

\smallskip

In Section \ref{mainsec:mainresProof}, we state our main results concerning the large-time asymptotics of the relaxed maximizers. 
We prove in Theorems \ref{theo:largeT} and \ref{theo:sigma1} that, in large time $T$, maximizing $C_T(a)$ over $\overline{\mathcal{U}}_L$ is approximately equivalent to maximizing the lowest eigenvalue $\sigma_1(a)$ of a matrix -- when $A_0$ is selfadjoint and its lowest eigenvalue $\lambda_1$ is simple then $\sigma_1(a)=\int_\Omega a\vert\phi_1\vert^2$ where $\phi_1$ is a normalized eigenfunction of $-A_0$ associated to $\lambda_1$. The latter limit problem has a unique solution $a_1=\mathds{1}_{\omega^*}$, with $\omega^*$ enjoying nice regularity properties. We prove that any maximizer $a^\star_T\in\overline{\mathcal{U}}_L$ of $C_T$ converges to $\mathds{1}_{\omega^*}$ in $L^1$ norm, exponentially as $T\rightarrow+\infty$ at a rate given by the spectral gap of the operator $A_0$.
To determine the speed of convergence, we use an improved, quantitative version of the so-called {\it bathtub principle}, inspired by  \cite{MRBSIAM} and \cite[Prop. 2.7]{casas-Wachsmuth}. The use of this technique is one of the main novelties of the present paper.

We provide in Section \ref{sec:comments} several examples and we comment on the assumptions under which our main results are established. 
Following a relevant comment by Sylvain Ervedoza during a presentation of one of us, we comment in Section \ref{sec_comments_proof} on the strategy of proof that we develop, showing that, although the main result is perfectly intuitive, a naive approach of proof is bound to fail, and we have to resort to a more elaborate technique using kinds of quasi-maximizers.

The proofs of the main results are done in Section \ref{sec_proof}. 

In the last Section \ref{sec:concl}, we comment on related problems: first, the optimal controllability domain problem, which is dual to the one investigated in this paper; second, in contrast to what is developed here, the small-time asymptotics of the optimal observability problems. Finally we give some open problems.

\section{Large-time behavior of maximizers}\label{mainsec:mainresProof}
\subsection{Main results}\label{sec:mainHyp}
Throughout the paper, given $y=(y_1,\ldots,y_q)$ and $z=(z_1,\ldots,z_q)$ in $\C^q$, we denote by $\vert y\vert=(\sum_{i=1}^q\vert y_i\vert^2)^{1/2}$ the Hermitian norm and by $y\cdot\bar z=\sum_{i=1}^q y_i\bar z_i$ the Hermitian inner product. 

We consider the following assumptions:
\begin{itemize}
\item[$\Hun$] There exists a Hilbert basis $(\phi_j)_{j\in\N^*}$ of $L^2(\Omega,\C^q)$ consisting of (complex-valued) eigenfunctions of $A_{0}$, associated with eigenvalues $(\lambda_j)_{j\in\N^*}$ of finite multiplicity such that 
\[
\Re(\lambda_1)\leq\cdots\leq \Re(\lambda_j)\leq\cdots\to +\infty \quad \text{as }j\to +\infty.
\]
\item[$\Htrois$] For every $j\in\N^*$, the eigenfunction $\phi_j$ is analytic in $\Omega$.

\medskip

We define
$$
J_1=\{j\in \N^*\mid \Re \lambda_j=\Re \lambda_1\} ,
$$
$$
p_0=\min\{p\in \N^*\mid \Re \lambda_p>\Re \lambda_1\}=\min (\N^*\backslash J_1)\geq 2.
$$
The positive real number $\Re \lambda_{p_0}-\Re \lambda_1$ is the {\it spectral gap} of the operator $A_0$.

\medskip

\item[$\Hdeux$] (\textit{Bathtub property at level $L$}) For every $j\in J_1$, the function $\phi_j$ belongs to $C^1(\overline{\Omega})$. Given any real-valued function $\Phi$ on $\Omega$ written as
$$
\Phi=\sum_{p\in J_1}\eta_p\bigg|\sum_{j\in J_1}\beta_{j,p}\phi_j\bigg|^2
$$
for some $(\beta_{j,p})_{j,p\in J_1}\in \R^{N_1\times P}\backslash \{0\}$, $\{\eta_p\}_{p\in J_1}\in [0,1]^P$ such that $\sum_{p\in J_1}\eta_p=1$, the set $\omega=\{\Phi\geq \nu\}$ where $\nu>0$ is such that $|\omega|=L |\Omega|$, satisfies 
$$
\inf_{\partial\omega\backslash \partial\Omega}|\nabla  \Phi|> 0.
$$

\end{itemize}

In Section~\ref{sec:comments}, we will discuss these assumptions, give various examples, and show that Assumption~$\Hdeux$ cannot be weakened.

This is because of Assumptions $\Hun$ and $\Htrois$ that we have used, in this paper and in particular in its title, the word ``parabolic". However, the word may be a bit of a misnomer because it often means that the semigroup generated by $-A_0$ is analytic. However, there exist some analytic hypoelliptic operators satisfying all above assumptions, while the semigroup they generate is not analytic \cite[Remark 4.1]{CdVHT}.

\medskip

Under $\Hun$, for any  $y_0\in D(A_0)$, the solution $y$ of \eqref{heatEq} such that $y(0,\cdot)=y_0$ can be expanded as
\begin{equation*}
y(t,x)=\sum_{j=1}^{+\infty}a_je^{-\lambda_jt}\phi_j (x),
\end{equation*}
where 
\begin{equation*}
a_j = \int_\Omega y^0(x) \overline{\phi_j} (x)\, dx \qquad\forall j\in \N^*.
\end{equation*}
Using the change of variable $b_j=a_je^{-\lambda_jT}$ and a homogeneity argument, we have
\begin{equation}\label{defCTdeterministic}
C_T(a)=\inf_{\sum_{j=1}^{+\infty}|b_j|^2=1}\ \  \int_0^T\!\! \int_\Omega a(x)\bigg|\sum_{j=1}^{+\infty}b_je^{\lambda_j t}\phi_j(x)  \bigg|^2 dx \, dt.
\end{equation}

Before stating our main results, let us give some notations.
 For any $j\in \N^*$, set
\begin{equation*}
\gamma_j(T)=
\left\{ \begin{array}{lll}
\displaystyle\frac{e^{2\Re (\lambda_j)T}-1}{2\Re(\lambda_j)} & \textrm{if} & \Re(\lambda_j)\neq 0, \\
T & \textrm{if} & \Re(\lambda_j)=0 .
\end{array}\right.
\end{equation*}
We define the Hermitian matrix 
$$
M_1(a)=\left(\int_\Omega a (x)\,\phi_i(x)\cdot\overline{\phi_j(x)}\, ds\right)_{i,j\in J_1}
$$
and we denote by $\sigma_1(a)$ the lowest eigenvalue of $M_1(a)$.
When $\# J_1=1$, i.e., $J_1=\{1\}$ (this is the case if $A_0$ is selfadjoint and $\lambda_1$ is simple), then $\sigma_1(a) = M_1(a)=\int_\Omega a\vert\phi_1\vert^2$.

\begin{theorem}\label{theo:largeT}
Under $\Hun$, we have
\begin{equation}\label{Eq:HP5}
 \bar\delta_T \sim \gamma_1(T)\max_{\mathds{1}_\omega\in\mathcal{U}_L}\sigma_1(\mathds{1}_\omega)
\quad \textrm{as}\quad T\to +\infty .
\end{equation}
Under $\Hun$, $\Htrois$ and $\Hdeux$, any solution $a^\star_T\in\overline{\mathcal{U}}_L$ of Problem~\eqref{maxCTa} satisfies
\begin{equation}\label{Eq:HP6}
\Big\Vert a_T^\star - \underset{a\in \overline{\mathcal{U}}_L}{\operatorname{argmax}}~\sigma_1(a)\Big\Vert_{L^1(\Omega)} = \operatorname{O}\left(e^{-(1-\xi)\Re(\lambda_{p_0}-\lambda_1)T/2}\right)\qquad \forall \xi>0 .
\end{equation}
\end{theorem}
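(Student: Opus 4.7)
The overall plan is to reduce, in the large-$T$ limit, the shape optimization problem \eqref{maxCTa} to the finite-dimensional spectral problem $\max_{a\in\overline{\mathcal{U}}_L}\sigma_1(a)$, and then to transfer a quantitative stability estimate for the latter into the claimed exponential convergence rate. Three ingredients are needed: matching asymptotic two-sided bounds for $C_T(a)$ in terms of $\sigma_1(a)$, a precise characterization of the argmax of $\sigma_1$ under $\Hdeux$, and a quantitative bathtub principle.

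For \eqref{Eq:HP5}, I would start from the spectral representation \eqref{defCTdeterministic}: expanding the squared modulus and integrating in $t$ turns the quantity to minimize into a Hermitian quadratic form $b^{*}G(T,a)b$ in $b=(b_j)$ with a natural block decomposition along the splitting $\ell^2=\ell^2(J_1)\oplus \ell^2(J_1^c)$. The $(J_1,J_1)$-block scales like $\gamma_1(T)$ and, modulo oscillating sub-leading cross terms in time due to different imaginary parts within $J_1$, equals $\gamma_1(T)\,M_1(a)$; the $(J_1^c,J_1^c)$-block has eigenvalues at least of order $\gamma_{p_0}(T)\gg\gamma_1(T)$; and the off-diagonal block is controlled by Cauchy--Schwarz. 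Testing with a unit vector supported in $J_1$ taken as a lowest eigenvector of $M_1(a)$ yields the upper bound $C_T(a)\leq \gamma_1(T)\sigma_1(a)(1+o(1))$, while the matching lower bound follows from the fact that any $b$ carrying mass outside $J_1$ pays a cost of order $\gamma_{p_0}(T)$. Crucially, I would track the quantitative relative errors in both bounds, which should be $O(e^{-(1-\xi)\Re(\lambda_{p_0}-\lambda_1)T})$ for every $\xi>0$; this is precisely what will feed the rate in the second part.

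The second step is to analyse the limit problem $\max_{a\in\overline{\mathcal{U}}_L}\sigma_1(a)$. Writing $\sigma_1(a)=\min_{|\beta|=1}\int_\Omega a\,|\sum_{j\in J_1}\beta_j\phi_j|^2$, the first-order optimality conditions together with the analyticity of the $\phi_j$ guaranteed by $\Htrois$ identify any maximizer as a super-level set $\mathds{1}_{\{\Phi\geq\nu\}}$ of a function $\Phi$ of exactly the form appearing in $\Hdeux$; the non-degeneracy $\inf_{\partial\omega\setminus\partial\Omega}|\nabla\Phi|>0$ then rules out plateaus of $\Phi$ at the critical level, which both forces uniqueness of $a_1=\mathds{1}_{\omega^*}$ and, adapting the arguments of \cite{MRBSIAM} and \cite[Prop.~2.7]{casas-Wachsmuth}, yields a quantitative bathtub inequality of the form $\sigma_1(a_1)-\sigma_1(a)\geq c\,\|a-a_1\|_{L^1(\Omega)}^{2}$ valid on the whole of $\overline{\mathcal{U}}_L$.

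Combining the two previous steps to obtain \eqref{Eq:HP6} is the delicate point. A naive approach would write $C_T(a_T^\star)=\bar\delta_T\geq C_T(a_1)$ and divide by $\gamma_1(T)$, but as the authors warn in Section~\ref{sec_comments_proof}, the relative errors in Step~1 depend a priori on $a$ and such a direct comparison only yields qualitative convergence. The plan is instead to test $\bar\delta_T$ against a family of quasi-maximizers $\widetilde a_T\in\overline{\mathcal{U}}_L$ with $\widetilde a_T\to a_1$, constructed so that the quantitative lower bound on $C_T(\widetilde a_T)$ from Step~1 is explicit and of matching order. The resulting inequality then gives $\sigma_1(a_1)-\sigma_1(a_T^\star)=O(e^{-(1-\xi)\Re(\lambda_{p_0}-\lambda_1)T})$, and plugging this into the coercivity estimate of Step~2 yields \eqref{Eq:HP6} after a square root. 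Constructing these quasi-maximizers and controlling the associated error uniformly in $T$ is, I expect, the main technical obstacle.
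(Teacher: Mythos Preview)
Your plan is correct and follows the paper's strategy essentially step for step: the upper bound by testing on $\ell^2(J_1)$, the quantitative bathtub inequality for $\sigma_1$ (this is Theorem~\ref{theo:sigma1}), and the use of quasi-maximizers to obtain a sharp lower bound on $\bar\delta_T$ that feeds the coercivity estimate.

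The one piece you leave open---the construction of the quasi-maximizers $\widetilde a_T$---is indeed the heart of the paper's argument, and the specific choice is worth knowing: the paper takes $\widetilde a_T=a_{\nu}:=\nu a_1+(1-\nu)L$, a convex combination of the limit optimizer $a_1=\mathds{1}_{\omega^*}$ and the constant density $L$. The reason this works is that for $a\equiv L$ the cross block $D_{T,L}(c)$ vanishes identically by $L^2$-orthogonality of the $\phi_j$, so $D_{T,a_\nu}=\nu D_{T,a_1}$, while $B_{T,L}(c)=L\sum_{j\geq p_0}\gamma_j(T)|c_j|^2\geq L\gamma_{p_0}(T)$ is large. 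Splitting off part of $(1-\nu)B_{T,L}$ to absorb the cross term via Cauchy--Schwarz, one obtains $\bar\delta_T\geq \nu_T\gamma_1(T)\sigma_1(a_1)$ for $T$ large, provided $\nu_T\to1$ slowly enough that $(1-\nu_T)\gamma_{p_0}(T)/\gamma_1(T)\to+\infty$. Choosing $1-\nu_T$ of order $e^{(\eta-2\Re(\lambda_{p_0}-\lambda_1))T}$ with $\eta$ arbitrarily close to $2\Re(\lambda_{p_0}-\lambda_1)$ then gives $\sigma_1(a_1)-\sigma_1(a_T^\star)\leq(1-\nu_T)\sigma_1(a_1)$, and the square root of the bathtub coercivity produces exactly the rate in \eqref{Eq:HP6}.
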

\begin{remark}
Note that \eqref{Eq:HP5} makes sense since
$$
\max_{\mathds{1}_\omega\in\mathcal{U}_L}\sigma_1(\mathds{1}_\omega)=\max_{a\in\overline{\mathcal{U}}_L}\sigma_1(a)\geq \sigma_1(L)=L>0.
$$
\end{remark}
Theorem~\ref{theo:largeT} highlights the interest of the shape optimization problem
\begin{equation}\label{max:sigma1}\tag{$\mathscr{P}_{\sigma_1}$}
\max_{a\in\overline{\mathcal{U}}_L}\sigma_1(a).
\end{equation}

\begin{theorem}\label{theo:sigma1}
Under $\Hun$ and $\Hdeux$, the problem \eqref{max:sigma1} has a unique\footnote{Here and in the sequel, it is understood that the optimal set is unique within the class of all measurable subsets of $\Omega$ quotiented by the set of all measurable subsets of $\Omega$ of zero measure.} solution $a_1=\mathds{1}_{\omega^*}\in\mathcal{U}_L$ satisfying the following properties:
\begin{itemize}
\item There exists $\mu^*>0$ such that 
\begin{equation}\label{carac:omegastar}
\omega^*=\bigg\{x\in \Omega \ \mid \ \sum_{k\in J_1}\alpha_k\bigg|\sum_{j\in J_1}b_j^k\phi_j(x)\bigg|^2 > \mu^*\bigg\},
\end{equation}
for some $(\alpha_k)_{k\in J_1}\in [0,1]^{\# J_1}$ and $(b_j^k)_{j\in J_1}\in\C^{\# J_1}$ such that 
$$
\sum_{j\in J_1}\alpha_j= \sum_{j\in J_1}|b_j^k|^2=1\qquad \forall k\in J_1.
$$
\item There exists $K>0$ such that
\begin{equation}\label{Eq:EstQuant}
\sigma_1(a_1)\geq \sigma_1(a) + K\Vert a-a_1\Vert_{L^1(\O)}^2\qquad \forall a \in \overline{\mathcal U}_L.
\end{equation}
\item If in addition $\Htrois$ holds, then $\omega^*$ is semi-analytic\footnote{A subset $\omega$ of a real analytic finite dimensional manifold $M$ is said to be semi-analytic if it can be written in terms of equalities and inequalities of analytic functions.
Recall that semi-analytic subsets are Whitney stratifiable (see \cite{Hardt,Hironaka}) and enjoy local finiteness properties, such that: local finite perimeter, locally finite number of connected components, etc.}. In particular, it has a finite number of connected components.
\end{itemize}
\end{theorem}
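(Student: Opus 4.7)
The plan is to exploit the variational identity
\[
\sigma_1(a) = \min_{\alpha\in\mathbb{S}^{N_1-1}} \int_\Omega a(x)\bigg|\sum_{j\in J_1}\alpha_j\phi_j(x)\bigg|^2 dx \qquad (N_1 := \#J_1),
\]
which exhibits $\sigma_1$ as an infimum of weak-$\star$ continuous linear functionals on $\overline{\mathcal U}_L$. Thus $\sigma_1$ is concave and upper semi-continuous on the weak-$\star$ compact convex set $\overline{\mathcal U}_L$, and a maximizer $a_1$ exists. The remaining task is to identify its structure, establish uniqueness, and quantify the deficit $\sigma_1(a_1)-\sigma_1(a)$.

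To obtain the characterization \eqref{carac:omegastar}, I would invoke a Danskin-type description of the superdifferential. Since $a\mapsto M_1(a)$ is linear, every $\Phi\in\partial^+\sigma_1(a_1)$ has the form
\[
\Phi(x) = \sum_{k\in J_1}\alpha_k\bigg|\sum_{j\in J_1}b_j^k\phi_j(x)\bigg|^2,
\]
where the $(b^k)$ are unit eigenvectors of $M_1(a_1)$ associated with $\sigma_1(a_1)$ and $(\alpha_k)$ is a probability vector. Maximality of $a_1$ on the convex set $\overline{\mathcal U}_L$ yields one such $\Phi$ satisfying $\int_\Omega\Phi(a-a_1)\,dx\le 0$ for all $a\in\overline{\mathcal U}_L$; the classical bathtub principle then forces $a_1 = \mathds{1}_{\{\Phi>\mu^*\}}+\tau\mathds{1}_{\{\Phi=\mu^*\}}$ for some $\mu^*>0$ and $\tau\in[0,1]$. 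At this point assumption $\Hdeux$ is decisive: applied to this particular $\Phi$, it gives $\inf_{\partial\{\Phi\geq\mu^*\}\setminus\partial\Omega}|\nabla\Phi|>0$, and since $\Phi\in C^1(\overline\Omega)$ the implicit function theorem implies $|\{\Phi=\mu^*\}|=0$. Hence $a_1=\mathds{1}_{\omega^*}$ is bang-bang with $\omega^*$ as in \eqref{carac:omegastar}. Uniqueness then follows from concavity: the set of maximizers is convex, and if $a_1,\tilde a_1$ were two distinct bang-bang maximizers then $(a_1+\tilde a_1)/2$ would also be a maximizer, hence bang-bang by the same argument, a contradiction.

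For \eqref{Eq:EstQuant}, concavity of $\sigma_1$ combined with the supergradient property of $\Phi$ at $a_1$ gives
\[
\sigma_1(a_1) - \sigma_1(a) \geq \int_\Omega \Phi(a_1-a)\,dx \qquad \forall a\in\overline{\mathcal U}_L,
\]
so it suffices to prove a quantitative bathtub inequality $\int_\Omega\Phi(a_1-a)\,dx \geq K\|a-a_1\|_{L^1(\Omega)}^2$ uniformly in $a$. This is the main technical step and where I expect the central difficulty: starting from $\int_\Omega \Phi(a_1-a)\,dx = \int_\Omega(\Phi-\mu^*)(a_1-a)\,dx$ (using that $\int (a_1-a)=0$), one uses $\inf_{\partial\omega^*}|\nabla\Phi|>0$ together with a tubular-neighborhood parametrization around $\{\Phi=\mu^*\}$ and the coarea formula to show that $|\Phi(x)-\mu^*|$ grows at least linearly in the signed distance to $\partial\omega^*$; integrating this linear cost across the layer where $a$ differs from $a_1$ yields the quadratic $L^1$ bound. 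The quantitative bathtub results of \cite{MRBSIAM, casas-Wachsmuth} quoted in the introduction are designed precisely for this step.

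The final assertion under $\Htrois$ is almost cosmetic: each $\phi_j$ being real-analytic on $\Omega$, the function $\Phi$ is a finite sum of products of analytic and conjugate-analytic factors, hence real-analytic. Therefore $\omega^*=\{\Phi>\mu^*\}$ is semi-analytic by definition, and the Hardt--Hironaka stratification theory cited in the footnote yields in particular the finite number of connected components.
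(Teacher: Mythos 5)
Your proposal follows essentially the same route as the paper: existence by upper semicontinuity and weak-$\star$ compactness, identification of the optimal set through the superdifferential of $\sigma_1$ and the bathtub principle, uniqueness by concavity combined with the bang-bang property, a quantitative bathtub inequality for \eqref{Eq:EstQuant} driven by the nondegeneracy of $\nabla\Phi$ on the level set, and semi-analyticity from $\Htrois$. The quantitative step you leave as a sketch is precisely where the paper's only technical lemma lives: one proves $|\{|\Phi-\mu^*|<\delta\}|\leq M\delta$ (using $\Hdeux$, the mean value theorem and a Minkowski-content/perimeter bound on $\{\Phi=\mu^*\}$), deduces $\int_\Omega(a_1-a)\Phi\geq \delta\left(\Vert a-a_1\Vert_{L^1(\Omega)}-M\delta\right)$, and optimizes $\delta\sim\Vert a-a_1\Vert_{L^1(\Omega)}$; your ``integrate the linear cost across the layer'' is this argument, and the references you cite contain it, so I regard that part as the same proof.

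The one genuine soft spot is your deduction that $|\{\Phi=\mu^*\}|=0$ from $\Hdeux$ and the implicit function theorem. $\Hdeux$ only bounds $|\nabla\Phi|$ from below on $\partial\{\Phi\geq\nu\}\setminus\partial\Omega$, so the implicit function theorem only tells you that this topological boundary is a $C^1$ hypersurface of null measure; it does not exclude a positive-measure plateau of $\Phi$ at height $\mu^*$ sitting in the interior of the superlevel set (a $C^1$, even $C^\infty$, function can be constant on a fat set without its gradient vanishing anywhere on the boundary of $\{\Phi\geq\mu^*\}$), nor does it guarantee that the bathtub level $\mu^*$ coincides with the level $\nu$ for which $\Hdeux$ is formulated when the distribution function of $\Phi$ has a jump. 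The paper closes this step by a different mechanism: a Lebesgue-point argument shows $\Phi\equiv\mu^*$ a.e.\ on $\{0<a_1<1\}$, and then \emph{analyticity} of $\Phi$ upgrades a positive-measure level set to $\Phi\equiv\mu^*$ and $\nabla\Phi\equiv 0$ on all of $\Omega$, which contradicts $\Hdeux$. You should either invoke analyticity at this point as the paper does, or supply another argument that the level set is null; as written, the implicit function theorem alone does not do the job.
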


\begin{remark}
Concerning the randomized observability constant defined by \eqref{defCTranda}, it follows from some results in \cite{MR3325779} that, for $T$ is large enough,
$$
\sup_{a\in \overline{\mathcal{U}}_L}C_{T,{\rm rand}}(a)=\gamma_1(T)\max_{\mathds{1}_\omega\in\mathcal{U}_L}\sigma_1(\mathds{1}_\omega).
$$
Therefore, Theorem \ref{theo:largeT} shows that, in large time, maximizing the deterministic observability constant is almost equivalent to maximizing the randomized observability constant.
\end{remark}

\begin{remark}
The exponential rate of convergence~\eqref{Eq:HP6} in Theorem~\ref{theo:largeT} is established thanks to a \emph{quantitative} version of the \emph{bathtub principle}. The bathtub principle is an elementary inequality yielding the solutions of the shape optimization problem
$$
\sup_{\omega \in \mathcal{U}_L}\int_\omega f
$$
where $f$ is a given real-valued integrable function on $\Omega$. In recent years, quantified versions of this principle were derived and used in optimal control problems. We will build on the version stated in \cite[Proposition 24]{MRBSIAM}. We refer to Section \ref{sec:biblio} for more background on this inequality but we merely state here that, while it is now a known tool for optimal control theory, it is to the best of our knowledge the first time that such quantified inequalities are used in the study of observability constants.
\end{remark}

\begin{remark}
 In the theory of rearrangements, the quantitative estimate \eqref{Eq:EstQuant} can be seen as a strengthened Hardy-Littlewood inequality; to the best of our knowledge, the first paper to investigate such strengthening was  \cite{CianchiFerone}. However, the results of \cite{CianchiFerone} do not apply here, as they are given in a different functional setting (roughly speaking, with $L^p-L^q$ ($1<p<\infty, q=p/(p-1)$) constraints rather than $L^\infty-L^1$). 
The quantitative estimate \eqref{Eq:EstQuant} can take different forms. 
We refer to \cite[Lemma 7.73]{BrascoDePhilippis}, \cite[Proposition 24]{MRBSIAM}, for similar spectral quantitative inequalities in a different context and to \cite[Theorem 1]{Lemou} where such inequalities are used to study the stability of Vlasov-Poisson systems. The proofs of the inequality in the three latter references revolve around the same idea, that of partial Schwarz rearrangements. Finally, we mention \cite[Proposition 2.7]{casas-Wachsmuth} where similar inequalities are obtained in the context of stability of bang-bang optimal controls, with a more direct proof that we will follow.
\end{remark}

\subsection{Examples and comments on the assumptions}\label{sec:comments}

\paragraph{The Dirichlet-Laplacian operator.}
Take $q=1$ and  $-A_0$ to be the Dirichlet-Laplacian operator defined on its domain $D(A_0)=H^1_0(\Omega)\cap H^2(\Omega)$. It follows from the spectral theorem \cite[Section~6.4]{MR2759829} that $\Hun$ is satisfied. By hypoelliptic analyticity of the Laplacian, the eigenfunctions of $A_0$ are analytic in $\Omega$ (and we take them real-valued) and $\Htrois$ is satisfied. 
Regarding $\Hdeux$, note that the first eigenvalue $\lambda_1(\Omega)$ of $A_0$ is simple (hence, $J_1=\{1\}$) according to the Krein-Rutman theorem and the eigenfunction $\phi_1$ is positive in $\Omega$. 
By \eqref{carac:omegastar}, we have $\omega^* = \{ \phi_1^2>\mu^*\}$ and, by analyticity of $\phi_1$, $\omega^*$ satisfies the interior sphere property (see \cite{Henrot-Pierre}), $\partial\omega$ is at a positive distance of $\partial\Omega$ and, according to the Hopf maximum principle, 
$$
\inf_{\partial\omega}\Vert \nabla \phi_1\Vert>0.
$$
Hence $\Hdeux$ is satisfied. 

\paragraph{The Robin-Laplacian operator.}
Take $q=1$, $\beta>0$ and $-A_0$ to be the Robin-Laplacian operator with boundary condition $\partial_ny+\beta y=0$ on $\partial\Omega$. By similar arguments, $\Hun$ and $\Hdeux$ are satisfied. The analyticity property $\Htrois$ follows for instance from \cite[Section~V.4]{MR969367}.

\paragraph{The Dirichlet Stokes operator in the 2D unit disk.}
Consider the Stokes equation 
$$
\partial_t y -\triangle y + \nabla p=0,\quad\mathrm{div}\, y=0
$$
in $\Omega=\{ x\in\R^2\ \vert\ \Vert x\Vert< 1\}$, the Euclidean unit disk of $\R^2$, with Dirichlet boundary conditions.
The Stokes operator $A_0:D(A_0)\rightarrow H$ is defined by $A_0=-\mathcal{P}\triangle$, with $D(A_0)=\{ y\in V \ \vert\ A_{0}y\in H \}$, with $V = \{ y\in (H^1_0(\Omega))^2\ \vert\ \mathrm{div}\, y=0 \}$, 
$H = \{ y\in (L^2(\Omega))^2\ \vert\ \mathrm{div}\, y=0 ,\ y_{\vert\partial\Omega}.n=0  \}$
and $\mathcal{P}:(L^2(\Omega))^2 = H\overset{\perp}{\oplus}H^\perp\rightarrow H$ the Leray projection (see \cite{MR2986590}).
The first eigenfunction is given by 
\begin{equation*}
\phi_{0,1}(r,\theta) = \frac{-J_0'(\sqrt{\lambda_{0,1}} r) }{ \sqrt{\pi} \sqrt{\lambda_{0,1}} \vert J_0(\sqrt{\lambda_{0,1}})\vert }  \begin{pmatrix} -\sin\theta \\ \cos\theta \end{pmatrix} ,
\end{equation*}
in polar coordinates $(r,\theta)$ (see \cite{MR2480708,MR1906228}), where $J_0$ is the Bessel function of the first kind of order $0$ and $\lambda_{0,1}=z_{1,1}^{2}$ with $z_{1,1}>0$ is the first positive zero of $J_{0}$.
Using that $\phi_{0,1}$ is radially-symmetric, it follows from properties of the Bessel functions that $\Hun$, $\Htrois$ and $\Hdeux$ are satisfied. 

\paragraph{System of coupled heat equations in the cube.}
Setting $Y=(y_1,y_2,y_3)^\top$, consider the system of coupled 1D heat equations
$$
\partial_tY-\triangle Y+AY=0, \qquad 
$$
in $\Omega=(0,1)^3$, with Dirichlet boundary conditions, where $\triangle Y = (\triangle y_1,\triangle y_2,\triangle y_3)^\top$, where $A\in M_3(\C)$ is a $3\times 3$ matrix with three distinct complex eigenvalues $\mu_1$, $\mu_2$, $\mu_3$ and such that $-\pi^2<\Re \mu_1=\Re \mu_2<\Re \mu_3$. Let $E_i=\operatorname{Span}(u_i)$, with $u_i\in\C^3$, be the eigenspace of $A$ associated to $\mu_i$, for $i\in\{1,2,3\}$. Set $\rho=\Re \mu_1$.

The above system is known to be observable in any time $T>0$ on any nonempty open subset $\omega$ of $\Omega$. We refer for instance to \cite[Theorem~1.1]{ammar2009kalman} or \cite[Proof of the first point of Theorem~1]{lissy2019internal}.

Here, we have $q=3$, $A_0=\triangle+A\operatorname{Id}$ on $D(A_0)=(H^2(\Omega)\cap H_0^1(\Omega))^3$, $J_1=\{1,2\}$, $p_0=3$ and $\gamma_1(T)=(e^{2\rho T}-1)/(2\rho)$. The assumptions $\Hun$ and $\Htrois$ are satisfied, and
$\phi_1(x)=\sin(\pi x_1)\sin (\pi x_2)u_1$ and $\phi_2(x)=\sin(\pi x_1)\sin (\pi x_2)u_2$.
We define
$$
\Phi(x)=\eta \sin^2(\pi x_1)\sin^2(\pi x_2)\left|\beta_{11}u_1+\beta_{12}u_2\right|^2+(1-\eta) \sin^2(\pi x_1)\sin^2(\pi x_2)\left|\beta_{21}u_1+\beta_{22}u_2\right|^2,
$$
where $\eta \in [0,1]$ and $\beta_{ij}\neq 0$. Reasoning as before, and using that $\beta_{11}u_1+\beta_{12}u_2\neq 0$ and $\beta_{21}u_1+\beta_{22}u_2\neq 0$ (by linear independence of $u_1$ and $u_2$), we get that $|\nabla \Phi|$ is uniformly bounded below on any nontrivial level set of $\Phi$, and thus $\Hdeux$ is satisfied.

\paragraph{On the sharpness of $\Hdeux$: periodic Laplacian in the 1D torus.} 
Take $\Omega=(0,2\pi)$, $q=1$ and $A_0=-d^2/dx^2$ with periodic boundary conditions $y(0)=y(2\pi)$ and $y'(0)=y'(2\pi)$.
The first eigenvalue is $\lambda_1=1$ with eigenspace spanned by $x\mapsto \cos x$ and $x\mapsto \sin x$. For every $a\in \overline{U}_L$, 
we have
$$
M_1(a)=\begin{pmatrix}
\int_0^{2\pi} a(x)\cos^2x\, dx & \int_0^{2\pi} a(x)\cos x\sin x\, dx\\[1mm]
\int_0^{2\pi} a(x)\cos x\sin x\, dx & \int_0^{2\pi} a(x)\sin^2x\, dx 
\end{pmatrix},
$$
and thus
$$
\sigma_1(a)=\frac12 \int_0^{2\pi}a(x)\, dx- \frac12 \sqrt{\left(\int_0^{2\pi}a(x)\cos (2x)\, dx\right)^2+\left(\int_0^{2\pi} a(x)\sin(2x)\, dx\right)^2} .
$$
Therefore, the optimal design problem \eqref{max:sigma1} (maximizing $\sigma_1$ over $\overline{\mathcal{U}}_L$) is equivalent to
$$
\max_{a\in\overline{\mathcal{U}}_L}\left(\int_0^{2\pi}a(x)\cos (2x)\, dx\right)^2+\left(\int_0^{2\pi} a(x)\sin(2x)\, dx\right)^2.
$$
It is easy to see that the optimal value for this problem is 0, meaning that the maximal value of $\sigma_1$ is $\frac{L\pi}{2}$. Moreover, the maximal value is reached for any function $a$ given by
$$
a(x)=\frac{L}{2}+\eta \bigg(a_1^c\cos x+a_1^s\sin x+\sum_{k=3}^m (a_k^c\cos (kx)+a_k^s\sin (kx))\bigg)
$$ 
for any $m\in \overline{\N}\setminus \{0,1,2\}$ and any families of real numbers $(a_k^s)_{1\leq k\leq m}$ and $(a_k^c)_{1\leq k\leq m}$, with $\eta$ chosen small enough such that $a\in\overline{\mathcal{U}}_L$.
Hence, there exist maximizers in $\overline{\mathcal{U}}_L\setminus\mathcal{U}_L$ and the conclusion of Theorem~\ref{theo:sigma1} is not true in that case. The main reason is that $\Hdeux$ is not satisfied here. Indeed, one has $\# J_1=2$ and it is possible to choose adequately the coefficients defining $\Phi$ so that $\Phi(x)=\cos^2x+\sin^2x=1$, and its normal derivative vanishes on $\partial\omega$.

\subsection{Comments on the strategy of proof}\label{sec_comments_proof}
In this section, we describe our strategy of proof, highlighting the difficulties that will be encountered, on the example of the Dirichlet heat equations. We take $q=1$ and $-A_0$ equal to the Dirichlet-Laplacian on $D(A_0)=H^1_0(\Omega)\cap H^2(\Omega)$. The operator $A_0$ is selfadjoint, of compact resolvent and then its spectrum is real and the first eigenvalue $\lambda_1(\Omega)$ is simple, and we take real-valued eigenfunctions. 

Recalling that $\bar\delta_T$ is the optimal value for Problem~\eqref{maxCTa}, defined by \eqref{maxCTa}, considering particular choices of coefficients $b_j$ in \eqref{defCTdeterministic}, we have
\begin{eqnarray}\label{train0932}
\bar\delta_T&\leq & \gamma_1(T) \max_{\mathds{1}_\omega \in \mathcal{U}_L}\int_\Omega a(x)\phi_1(x)^2\, dx \qquad\forall a\in\overline{\mathcal{U}}_L
\end{eqnarray}
(see the beginning of Section~\ref{prooftheo1} for details).

Let us now establish a lower bound for $\bar\delta_T$.
Let $a_{1}^T\in\overline{\mathcal{U}}_L$ be the unique solution to the shape optimization problem on the right-hand side of \eqref{train0932}.
We define
$$
E_{T,a}(b)=\int_0^T \! \int_\Omega a(x)\bigg(\sum_{j=1}^{+\infty}b_je^{\lambda_j t}\phi_j(x)  \bigg)^2 dx \, dt
$$
and we set $\varepsilon=|b_1|^2$ and $c_j=b_j/\sqrt{1-\varepsilon}$ for $j\geq 2$, so that $\sum_{j=2}^{+\infty}\vert c_j\vert^2=1$. A straightforward computation gives 
$$
E_{T,a}(b)=\varepsilon A_{T,a}(c)+(1-\varepsilon) B_{T,a}(c)+2\sqrt{\varepsilon(1-\varepsilon)}D_{T,a}(c)
$$
where
\begin{eqnarray*}
A_{T,a}(c) &=&  \gamma_1(T) \int_\Omega a\phi_1^2 ,  \\
B_{T,a}(c) &=&\int_0^T\!  \int_\Omega a(x)\bigg(\sum_{j=2}^{+\infty} c_je^{\lambda_j t}\phi_j(x)  \bigg)^2 \, dx dt , \\
D_{T,a}(c) &=& \int_0^T \int_\Omega a(x)e^{\lambda_1t}\phi_1(x)\sum_{j=2}^{+\infty}c_je^{\lambda_jt}\phi_j(x) \, dx\, dt = \sum_{j=2}^{+\infty}c_j\frac{e^{(\lambda_j+\lambda_1) T}-1}{\lambda_1+\lambda_j} \int_\Omega a\phi_1\phi_j .
\end{eqnarray*}
From the maximality of $\bar\delta_T$, we have
$$
\bar\delta_T\geq \inf_{\sum_{j=2}^{+\infty}c_j^2=1}\min_{\varepsilon\in [0,1]} \varepsilon A_{T,a}(c)+(1-\varepsilon) B_{T,a}(c)+2\sqrt{\varepsilon(1-\varepsilon)}D_{T,a}(c)
$$ 
Due to the presence of exponential terms, we could expect to be able to exploit this decomposition to prove that the first mode dominates the others when $T$ is sufficiently large. 
This does not work however, essentially because the modes $j\geq2$ in the decomposition can modify the observability constant in large time. To understand this difficulty, note that, by the Cauchy-Schwarz inequality, 
$$
|D_{T,a_1^T}(c)| \leq  \sqrt{A_{T,a_1^T}(c)B_{T,a_1^T}(c)} .
$$
Therefore
$$
\bar\delta_T \geq \min_{\varepsilon\in[0,1]} \left(  \varepsilon A_{T,a_1^T}(c)+(1-\varepsilon) B_{T,a_1^T}(c)-2\sqrt{\varepsilon(1-\varepsilon)}\sqrt{A_{T,a_1^T}(c)B_{T,a_1^T}(c)} \right) = 0.
$$
The latter equality follows from the fact that the minimum is equal to the first eigenvalue of the matrix
$$
\begin{pmatrix}A_{T,a_1^T}(c) & \sqrt{A_{T,a_1^T}(c)B_{T,a_1^T}(c)} \\ \sqrt{A_{T,a_1^T}(c)B_{T,a_1^T}(c)} & B_{T,a_1^T}(c)\end{pmatrix}.
$$
Unfortunately, this estimate is not useful. The difficulty here is due to the presence of crossed terms that are difficult to handle. To overcome it, we will consider a particular path of ``quasi-maximizers'', replacing $a_1^T$ by a convex combination of the expected maximizer in large time and the constant function equal to $L$. We will choose appropriate convexity weights, to give an increasing importance to the term $a_1^T$ as $T\rightarrow+\infty$.

\section{Proofs}\label{sec_proof}
\subsection{Proof of Theorem~\ref{theo:sigma1}}\label{prooftheo2}
First of all, noting that, for every $a\in\overline{\mathcal{U}}_L$,
$$
\sigma_1(a)=\min_{\sum_{j\in J_1}|b_j|^2=1}\int_\Omega a(x)\left|\sum_{j\in J_1}b_j\phi_j(x)\right|^2\, dx 
$$
is as minimum of linear continuous functionals, $\sigma_1$ is upper semicontinuous for the weak star topology of $L^\infty$. By compactness of $\overline{\mathcal{U}}_L$ for this topology, it follows that \eqref{max:sigma1} has at least one solution $a_1$.

We are going to prove that $a_1$ is unique and that $a_1=\mathds{1}_{\omega^*}$ for some subset $\omega^*$ (``bang-bang property").

\paragraph{Bang-bang property.} In order to prove that $a_1$ is actually the characteristic function of some measurable subset, we exploit first-order optimality conditions.
From \cite[Theorem~1]{MR1351641}, since $\sigma_1(a)$ has a finite multiplicity, the mapping $\overline{\mathcal{U}}_L\ni a \mapsto \sigma_1(a)$ is sub-differentiable at $a_1$. Let $(b^k)_{k\in \widetilde{J}_1}$ be an orthonormal basis of the eigenspace associated to $\sigma_1(a_1)$, with $\widetilde{J}_1\subset J_1$. 
We have
$$
\partial \sigma_1(a_1)=\operatorname{co}\left\{\partial j_k(a_1)\ \mid \ k\in \widetilde{J}_1\right\}\quad \text{with}\quad j_k(a)=\int_\Omega a(x)\left|\sum_{j\in \widetilde{J}_1}b_j^k\phi_j(x)\right|^2\, dx,
$$ 
where ``co'' means ``convex hull''. 
We consider the tangent cone\footnote{The tangent cone $\mathcal{T}_{a_1}$ is the set of functions $h\in L^\infty(\Omega)$ such that, for any sequence of positive real numbers $\varepsilon_n$ decreasing to $0$, there exists a sequence of functions $h_n\in L^\infty(\Omega)$ converging to $h$ as $n\rightarrow +\infty$, and $a_1+\varepsilon_nh_n\in \overline{\mathcal{U}}_l$ for every $n\in\N$ (see \cite{MR1367820} for example).} $\mathcal{T}_{a_1}$ to the set $\overline{\mathcal{U}}_L$ at $a_1$, and the indicator function $\iota_{[0,1]}$ given by
$$
\iota_{\overline{\mathcal{U}}_L}(u)=\left\{\begin{array}{ll}
0 & \text{if } u\in\overline{ \mathcal U}_L,\\
+\infty & \text{otherwise}.
\end{array}\right.
$$
Finally, we define the linear functional $j_0:\overline{\mathcal{U}}_L\ni a \mapsto \int_\Omega a$.
The optimization problem \eqref{max:sigma1} can be recast as
$$
\min_{\substack{a\in L^\infty(\Omega)\\ j_0(a)=L|\Omega|}} \left( -\sigma_1(a)+\iota_{\overline{\mathcal{U}}_L}(a) \right).
$$
If $a_1$ is an optimal solution, according to the Lagrange multiplier rule, there must exist $\mu^*\in \R$ such that
$$
0\in \partial (-\sigma_1)(a_1)+\mu^*\partial j_0(a_1)+\partial \iota_{\overline{\mathcal{U}}_L}(a_1).
$$
Equivalently, there exists $(\alpha_k)_{k\in \widetilde{J}_1}\in \R_+^{\# \widetilde{J}_1}$ such that $\sum_{k\in \widetilde{J}_1}\alpha_k=1$ and for every $h\in L^\infty(\Omega)$ such that $0\leq a_1+\eta h \leq 1$ for $\eta>0$ small enough,
\begin{equation}\label{strasb:1750}
-\sum_{k\in \widetilde{J}_1}\alpha_k \langle dj_k(a_1),h\rangle+\langle dj_0(a_1),h\rangle=0\Longleftrightarrow \int_\Omega h(x)(\Psi(x)-\mu^*)\, dx\geq 0,
\end{equation}
where the function $\Psi:\Omega\to \R_+$ is defined by
\begin{equation*}
\Psi(x)=\sum_{k\in \widetilde{J}_1}\alpha_k\bigg|\sum_{j\in \widetilde{J}_1}b_j^k\phi_j(x)\bigg|^2.
\end{equation*}
Let us now prove that $a_1$  is necessarily {\it bang-bang}, i.e., that $a_1=\mathds 1_{\omega^*}$ for some Lebesgue measurable $\omega^*\subset \O$. By contradiction, assume that $|\{0<a_1<1\}|>0$. 
Let $x_0$ be a Lebesgue point of  $\{0<a_1<1\}$. There exists a sequence $(G_{n})_{n\in \N}$ of measurable subsets with $G_{n}\subset \{0<a_1<1\}$ such that 
\begin{equation}\label{x0:density1}
\lim_{\varepsilon \to 0}\frac{|G_n\cap B(x_0,\varepsilon)|}{|B(x_0,\varepsilon|}=1,
\end{equation}
where $B(x_0,\varepsilon)$ denotes the ball centered at $x_0$ with radius $\varepsilon$ in $\R^d$.
Setting $h=\mathds{1}_{G_{n}}$ and noting that $0\leq a_1 \pm\eta h\leq 1$  if $\eta$ is small enough, we infer from \eqref{strasb:1750} that
\[
\pm \int_{G_n} (\Psi(x)-\mu^*)\, dx\geq 0.
\] 
Dividing this inequality by $|G_{n}|$ and letting $G_{n}$ shrink to $\{x_0\}$ as $n\to +\infty$ we infer that $\Psi(x)=\mu^*$ a.e. in $\{0<a_1<1\}$.
Since $0<|\{0<a_1<1\}|\leq |\{\Psi=\mu^*\}|$ and $\Psi$ is analytic, we must have $\Psi=\mu^*$ and $\nabla \Psi=0$ everywhere in $\Omega$, which contradicts $\Hdeux$.

We thus infer that $|\{0<a_1<1\}|=0$ and hence $a_1=\mathds{1}_{\omega^*}$ for some Lebesgue measurable subset $\omega^*$ of $\Omega$ such that $|\omega^*|=L|\Omega|$.

\paragraph{Characterization of $\omega^*$.}
Consider a Lebesgue point $x_0$  of  $\omega^*$. Let $(G_{n})_{n\in \N}$ be a sequence of measurable subsets such that $G_{n}\subset\omega^*$ and \eqref{x0:density1} holds. Setting $h=-\mathds{1}_{G_{n}}$ and noting that $0\leq a_1 +\eta h\leq 1$ if $\eta$ is small enough, we infer from \eqref{strasb:1750} that
\[
\int_{G_n} (\Psi(x)-\mu^*)\, dx\geq 0.
\] 
Dividing this inequality by $|G_{n}|$ and letting $G_{n}$ shrink to $\{x_0\}$ as $n\to +\infty$ yields $\Psi\geq \mu^*$ a.e. in $\omega^*$. Similarly, $\Psi\leq \mu^*$ in $(\omega^*)^c$. We conclude that $\omega^*=\{\Psi >\mu^*\}$ by noting that $\{\Psi=\mu^*\}$ has zero Lebesgue measure.

\paragraph{Uniqueness and regularity of $\omega^*$.}

Assume by contradiction that Problem~\eqref{max:sigma1} has two distinct maximizers $a_1$ and $a_2$. As a consequence of our previous analysis, there exist $\omega_1\,, \omega_2\subset \O$  such that $a_i=\mathds{1}_{\omega_i}$ ($i=1,2$). By concavity of $\sigma_1$, $(a_1+a_2)/2$ is also a solution of Problem~\eqref{max:sigma1}, which contradicts the bang-bang property of maximizers.

Finally, the regularity property of $\omega^*$ 
follows from $\Htrois$, which implies the analyticity of $\Psi$. Therefore, $\omega^*$ is an open semi-analytic set.

\paragraph{Quantitative estimate.} 
At last, we prove the quantitative estimate \eqref{Eq:EstQuant}; the argument we follow to establish it is inspired and adapted from \cite{casas-Wachsmuth}. Observe that with the same function $\Psi$ we have, for any $a\in \overline{\mathcal U}_L$,
\[ \sigma_1(a)-\sigma_1(a_1)\leq \int_\O (a-a_1)\Psi.\] 
Let us prove that there exists $C>0$ such that
$$
\forall a \in  \overline{\mathcal U}_L, \quad \int_\O (a-a_1)\Psi\leq -C \Vert a-a_1\Vert_{L^1(\O)}^2,
$$
which can be seen as a quantitative version of the bathtub principle. 

We have proved that $a_1=\mathds{1}_{\omega^*}$ for some $\omega^*\subset \O$ such that $|\omega^*|=L|\O|$, and moreover that
$\Psi \geq \mu^*$ in $\omega^*$ and $\Psi \leq \mu^*$ in $\O\backslash \omega^*$.
Noting that for every $a\in \overline{\mathcal U}_L$, $ \int_\O (a-a_1)\Psi= \int_\O (a-a_1)(\Psi-\mu^*)$, that $a-a_1\leq 0$ on $\omega^*$ and $a-a_1\geq 0$ on $\omega^*$, we get that
$$
\int_\O (a-a_1)\Psi=-\int_\O |a-a_1|.|\Psi-\mu^*|\leq -\int_{\O_\delta} |a-a_1|.|\Psi-\mu^*|
$$ 
where, for a given $\delta >0$, the set $\O_\delta=\{|\Psi-\mu^*|\geq \delta\}$ is the complement of a tubular neighborhood of the levet set $\{\Psi=\mu^*\}$.
By definition of $\O_\delta$, it follows that 
\begin{multline}\label{eq1701}
\int_\O (a-a_1)\Psi \leq -\delta \Vert a-a_1\Vert_{L^1(\Omega_\delta)}
= -\delta (\Vert a-a_1\Vert_{L^1(\Omega)}-\Vert a-a_1\Vert_{L^1(\Omega\backslash \Omega_\delta)}) \\
\leq  -\delta (\Vert a-a_1\Vert_{L^1(\Omega)}-|\Omega\backslash \Omega_\delta|)
\end{multline}

\begin{lemma}\label{lem:1704}
There exists $M>0$ such that $|\Omega\backslash \Omega_\delta|\leq M\delta $ for every $\delta>0$. 
\end{lemma}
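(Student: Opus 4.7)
The plan is to establish the tube estimate via the coarea formula, using the lower bound on $|\nabla \Psi|$ provided by $\Hdeux$ and the finite level-set measure provided by the analyticity from $\Htrois$. Since for $\delta$ bounded away from zero the inequality is trivial ($|\Omega \setminus \Omega_\delta| \le |\Omega|$), it suffices to treat small $\delta$.

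First I would exploit $\Hdeux$: setting $c_0 := \inf_{\partial \omega^* \setminus \partial \Omega} |\nabla \Psi| > 0$ and using that $\Psi \in C^1(\overline{\Omega})$, there exists an open neighborhood $U$ of $\partial \omega^* \setminus \partial \Omega$ in $\overline{\Omega}$ on which $|\nabla \Psi| \geq c_0/2$. Next, I would apply the coarea formula on $U$ to obtain
$$\bigl| \{|\Psi - \mu^*| < \delta\} \cap U\bigr| \;\leq\; \frac{2}{c_0}\int_{\{|\Psi - \mu^*| < \delta\}\cap U} |\nabla \Psi|\, dx \;=\; \frac{2}{c_0}\int_{-\delta}^{\delta} \mathcal{H}^{d-1}\bigl(\{\Psi = \mu^* + s\} \cap U\bigr)\, ds.$$
To bound the right-hand side uniformly in $s$, I would invoke $\Htrois$: since $\Psi$ is a real-analytic function on $\Omega$, each level set $\{\Psi = \mu^* + s\}$ is a semi-analytic hypersurface, and by the uniform non-degeneracy $|\nabla \Psi| \geq c_0/2$ on $U$, the implicit function theorem together with a finite covering of a compact slice of $U$ gives a uniform upper bound $\mathcal{H}^{d-1}(\{\Psi = \mu^* + s\} \cap U) \leq A$ for $|s|$ small. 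This yields $|\{|\Psi - \mu^*| < \delta\} \cap U| \leq 4A\delta/c_0$.

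The remaining piece is the complement $\{|\Psi - \mu^*| < \delta\} \setminus U$, which consists of points where the tube approaches either $\partial \omega^* \cap \partial \Omega$ or a portion of the level set $\{\Psi = \mu^*\}$ lying on $\partial \Omega$ itself. The plan for this piece is to argue that, for $\delta$ small, every such point lies within a $O(\delta)$-thin collar of $\partial \Omega$: indeed, by compactness of $\overline{\Omega} \setminus U$ and continuity of $\Psi$, the set $\{|\Psi - \mu^*| < \delta\} \cap (\overline{\Omega} \setminus U)$ can only accumulate where $\Psi = \mu^*$, and the only such points outside $U$ lie on $\partial \Omega$; straightening $\partial \Omega$ in a finite collection of boundary charts and using that $\Psi$ is analytic up to the boundary, one can perform the same coarea argument locally in each chart (possibly after a further shrinkage), with a uniform lower bound on the tangential component of $\nabla \Psi$ near each problematic boundary point. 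Summing the contributions then gives a bound of the form $|\{|\Psi - \mu^*|<\delta\} \setminus U| \leq C'\delta$ for small $\delta$.

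I expect the main obstacle to be precisely this boundary contribution: $\Hdeux$ only controls $|\nabla \Psi|$ on $\partial \omega^* \setminus \partial \Omega$, so if $\partial \omega^*$ meets $\partial \Omega$ tangentially or if $\{\Psi = \mu^*\}$ has components lying on $\partial \Omega$, an extra boundary-chart argument is needed. Combining the interior estimate $4A\delta/c_0$ with the boundary contribution $C'\delta$ (for small $\delta$) and the trivial bound $|\Omega|$ (for large $\delta$), we obtain $|\Omega \setminus \Omega_\delta| \leq M\delta$ for every $\delta > 0$ with an appropriate $M > 0$.
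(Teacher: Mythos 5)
Your argument is correct in substance but follows a genuinely different route from the paper's. The paper does not use the coarea formula: it first proves, via a mean-value-theorem contradiction argument exploiting the gradient lower bound of $\Hdeux$ and the interior ball property of the $C^2$ hypersurface $\Sigma_{\mu^*}=\{\Psi=\mu^*\}$, a one-sided Hausdorff estimate $\operatorname{dist}_H(\Sigma_{\mu^*},\{\Psi=\mu^*+s\})=\operatorname{O}(s)$, so that $\Omega\backslash\Omega_\delta$ is contained in a $K\delta$-neighborhood of $\Sigma_{\mu^*}$; it then bounds the volume of that neighborhood by $2K\delta\operatorname{Per}(\Sigma_{\mu^*})$ through the Minkowski-content characterization of the perimeter. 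Your coarea computation on the neighborhood $U$ replaces both steps at once and is arguably more elementary: the uniform bound $\mathcal{H}^{d-1}(\{\Psi=\mu^*+s\}\cap U)\leq A$ does follow from the implicit function theorem and a finite covering as you say, and you avoid any discussion of the regularity or Minkowski content of $\Sigma_{\mu^*}$.

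The genuine weak point is the piece $\{|\Psi-\mu^*|<\delta\}\setminus U$, and you are right to flag it. Two things are unproved there. First, the claim that this set can only accumulate on $\partial\Omega$ presupposes $\{\Psi=\mu^*\}\cap\Omega\subset\partial\omega^*\setminus\partial\Omega$; a priori $\{\Psi=\mu^*\}$ could contain interior critical points of $\Psi$ not lying on $\partial\{\Psi\geq\mu^*\}$ (e.g.\ a local minimum of $\Psi$ at the value $\mu^*$ interior to $\{\Psi\geq\mu^*\}$), where $\Hdeux$ gives no information and the tube is only $\operatorname{O}(\delta^{1/2})$ thick. Second, the ``uniform lower bound on the tangential component of $\nabla\Psi$'' near $\partial\Omega$ that your boundary-chart argument needs is not supplied by any hypothesis: $\Hdeux$ deliberately excludes $\partial\Omega$ from the gradient bound, and analyticity alone only gives a \L{}ojasiewicz-type bound $\operatorname{O}(\delta^{\alpha})$ with possibly $\alpha<1$. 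Be aware, however, that the paper's own proof has exactly the same lacuna: its contradiction argument invokes $\Hdeux$ only at points of $\partial\omega^*\setminus\partial\Omega$ and silently identifies $\Sigma_{\mu^*}$ with that set. So your proof is on the same footing as the published one; to close either argument completely one must assume (as is implicitly intended, and as holds in all the examples of Section~\ref{sec:comments}) that $\{\Psi=\mu^*\}$ is a compact subset of $\Omega$ coinciding with $\partial\omega^*$, in which case your set $U$ already covers the whole tube for $\delta$ small and the boundary piece is empty.
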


\begin{proof}
It suffices to prove this estimate for $\delta>0$ small enough. We establish the existence of $K>0$ such that if $s>0$ is small enough then $ \{\Psi=\mu^*+s\}\subset \Sigma_{\mu^*}+K\delta \mathbb{B}$
where $\Sigma_{\mu^*}=\{\Psi=\mu^*\}$ and $\mathbb{B}$ is the centered unit ball. 
Assuming such a $K$ exists, we infer that$$
\Omega\backslash \Omega_\delta =\bigcup_{s=-\delta}^\delta \{\Psi=\mu^*+s\}\subset \Sigma_{\mu^*}+K\delta \mathbb{B}.
$$
Now, using the definition of the perimeter of rectifiable curves via the Minkowski content, we obtain 
$|\Omega\backslash \Omega_\delta |\leq 2\delta \operatorname{Per}(\Sigma_{\mu^*})$ for $\delta$ small enough,
whence the conclusion with $M=2 K \operatorname{Per}(\Sigma_{\mu^*})$. It thus remains to show that 
$$
\operatorname{dist}_H(\Sigma_{\mu^*},\{\Psi=\mu^*+s\})\leq Ms
$$
for $s>0$ small enough, where $\operatorname{dist}_H$ is the Hausdorff distance between two compact sets.

By contradiction, assume that there exist two sequences $(x_k)_{k\in\N}$ in $\Sigma_{\mu^*}$ and $(y_k)_{k\in\N}$ in $\Sigma_{\mu^*+s_k}$, where $(s_k)_{k\in \N}$ denotes a sequence converging to 0, such that 
$$
|x_k-y_k|=\operatorname{dist}_H(\Sigma_{\mu^*+s_k},\Sigma_{\mu^*}) \quad \text{and}\quad 
\lim_{k\to +\infty}\frac{|\Psi (x_k)-\Psi(y_k)|}{|x_k-y_k|}=0.
$$
By the mean value theorem, there exists $z_k$ belonging to the segment $[x_k,y_k]$ such that 
$$
\frac{|\Psi (x_k)-\Psi(y_k)|}{|x_k-y_k|}=\left\langle \nabla \Psi(z_k),\frac{y_k-x_k}{|y_k-x_k|}\right\rangle.
$$
Since $\Sigma_{\mu^*}$ is $\mathscr{C}^2$, it satisfies the uniform interior ball property, and therefore $y_k=x_k+t_k\nu(x_k)+\operatorname{o}(t_k)$ for $k$ large enough, with $t_k\to 0$, where $\nu(x_k)$ denotes the outward unit normal vector on $\Sigma_{\mu^*}=\partial \{\Psi \leq \mu^*\}$. We infer that 
$\left\langle \nabla \Psi(z_k),\nu(x_k)+\operatorname{o}(1)\right\rangle\to 0$ as $k\to +\infty$.
This contradicts $\Hdeux$. 
\end{proof}

Given any $a\in \overline{\mathcal U}_L$, we set $\delta =\frac{1}{2M}\Vert a-a_1\Vert_{L^1(\O)}$.
By \eqref{eq1701} and by Lemma~\ref{lem:1704}, we have
$$
\int_\O (a-a_1)\Psi \leq -\frac{1}{2M}\Vert a-a_1\Vert_{L^1(\O)}\left(\Vert a-a_1\Vert_{L^1(\O)}-\frac{M}{2M}\Vert a-a_1\Vert_{L^1(\O)}\right)
=  -\frac{1}{4M}\Vert a-a_1\Vert_{L^1(\O)}^2.
$$
The conclusion follows.

\subsection{Proof of Theorem~\ref{theo:largeT}}\label{prooftheo1}
Considering particular choices of coefficients $b_j$ equal to 0 in $\N^*\backslash J_1$ in \eqref{defCTdeterministic}, we have
\begin{eqnarray}
\bar\delta_T&\leq &\gamma_1(T) \sup_{a\in \overline{\mathcal{U}}_L}\min_{\sum_{j\in J_1}|b_j|^2=1}\int_\Omega a(x)\bigg|\sum_{j\in J_1}b_j\phi_j(x)\bigg|^2\, dx\nonumber \\
&=& \gamma_1(T) \max_{\mathds{1}_\omega \in \mathcal{U}_L}\min_{\sum_{j\in J_1}|b_j|^2=1}\int_\Omega a(x)\bigg|\sum_{j\in J_1}b_j\phi_j(x)\bigg|^2\, dx\nonumber \\
&=& \gamma_1(T) \max_{\mathds{1}_\omega \in \mathcal{U}_L}\sigma_1(\mathds{1}_\omega).
\label{eq1212}
\end{eqnarray}
To go from the first to the second line, we used Theorem \ref{theo:sigma1}. The rest of the proof is devoted to establishing a lower bound on $\bar\delta_T$.
We define
$$
E_{T,a}(b)=\int_0^T \! \int_\Omega a(x)\bigg|\sum_{j=1}^{+\infty}b_je^{\lambda_j t}\phi_j(x)  \bigg|^2 dx \, dt.
$$
and we set $\varepsilon=\sum_{j\in J_1}|b_j|^2$  (so that $1-\varepsilon=\sum_{j\in \N^*\backslash J_1}|b_j|^2$). We also define sequence $(c_j)_{\in \N^*}$ in $\ell^2(\R)$ by 
$$
c_j=\left\{\begin{array}{ll}
b_j/\sqrt{\varepsilon} & \text{if }j\in J_1\\
b_j/\sqrt{1-\varepsilon} & \text{otherwise}
\end{array}\right. 
$$
so that
$$
\sum_{j\in J_1}|c_j|^2=\sum_{j\in \N^*\setminus J_1}|c_j|^2=1.
$$
We define the two subsets of $\ell^2(\C)$
$$
\Lambda_{J_1}=\bigg\{(c_j)\in \C^{\# J_1}, \ \sum_{j\in J_1}|c_j|^2=1\bigg\}\quad \text{and}\quad \Lambda_{\N^*\setminus J_1}=\bigg\{(c_j)\in \ell^2(\N^*\setminus J_1), \ \sum_{j\in \N^*\setminus J_1}|c_j|^2=1\bigg\}.
$$
A straightforward computation shows that
$$
E_{T,a}(b)=\varepsilon A_{T,a}(c)+(1-\varepsilon) B_{T,a}(c)+2\sqrt{\varepsilon(1-\varepsilon)}D_{T,a}(c),
$$
where
\begin{eqnarray*}
A_{T,a}(c) &=& \gamma_1(T) \int_\Omega a(x)\bigg|\sum_{j\in J_1}c_j\phi_j(x)\bigg|^2\, dx\\
B_{T,a}(c) &=&\int_0^T\!  \int_\Omega a(x)\bigg|\sum_{j\in \N^*\backslash J_1} c_je^{\lambda_j t}\phi_j(x)  \bigg|^2 \, dx dt\\
D_{T,a}(c) &=& \Re \int_\Omega\!\int_0^T a(x)\sum_{k\in J_1}c_ke^{\lambda_1t}\phi_k(x)\overline{\sum_{j\in \N^*\backslash J_1}c_je^{\lambda_jt}\phi_j(x)} \, dtdx
\end{eqnarray*}
Consequently,
\begin{eqnarray*}
C_T(a) & = & \inf_{\sum_{j\in\N^*}\vert b_j\vert^2=1}E_{T,a}(b)\\
&=& \inf_{\varepsilon\in [0,1]}\inf_{(c_j)\in \Lambda_{J_1}}\bigg(\varepsilon A_{T,a}(c)+\inf_{(c_j)\in \Lambda_{\N^*\setminus J_1}}(1-\varepsilon) B_{T,a}(c)+2\sqrt{\varepsilon(1-\varepsilon)}D_{T,a}(c)\bigg)
\end{eqnarray*}
Moreover, note that
$$
\inf_{(c_j)\in \Lambda_{J_1}}A_{T,a}(c)=\gamma_1(T)\sigma_1(a)
$$
Let us consider the unique solution $a_1=\mathds{1}_{\omega^*}$ of \eqref{max:sigma1}.
From now on, with a slight abuse of notation, we will use the notation $A_{T,a_1}=\gamma_1(T)\sigma_1(a_1)$.

Let $\nu\in (0,1)$ to be chosen later, and define $a_\nu=\nu a_1+(1-\nu)L\in\overline{\mathcal{U}}_L$ (by convexity). By maximality of $\bar\delta_T$, we have
\begin{eqnarray}
\bar\delta_T & \geq & \inf_{\varepsilon\in [0,1]}\inf_{(c_j)\in \Lambda_{\N^*\setminus J_1}}(\varepsilon A_{T,a_\nu}(c)+(1-\varepsilon) B_{T,a_\nu}(c)+2\sqrt{\varepsilon(1-\varepsilon)}D_{T,a_\nu}(c))\nonumber \\
&\geq & \inf_{\varepsilon\in [0,1]}\inf_{(c_j)\in \Lambda_{\N^*\setminus J_1}}(\nu \varepsilon A_{T,a_1}+\nu(1-\varepsilon) B_{T,a_1}(c)+(1-\nu)(1-\varepsilon) B_{T,L}(c)+2\nu\sqrt{\varepsilon(1-\varepsilon)}D_{T,a_1}(c)).\nonumber 
\label{m1122}
\end{eqnarray}
To obtain this estimate, we have used that $D_{T,a_\nu}(c)=\nu D_{T,a_1}(c)$, since
\[
D_{T,L}(c)=L\Re \sum_{\substack{j\in \N^*\backslash J_1\\ k\in J_1}}\overline{c_j}c_k\frac{e^{(\overline{\lambda_j}+\lambda_1) T}-1}{\lambda_1+\overline{\lambda_j}}\int_\Omega \phi_k(x)\cdot\overline{\phi_j(x)} \, dx =0
\]
by orthogonality of the functions $\phi_j$ in $L^2(\Omega,\C^q)$.
%

We have
\begin{eqnarray}
\lefteqn{\inf_{(c_j)\in \Lambda_{\N^*\setminus J_1}}(\nu \varepsilon A_{T,a_1}+\nu(1-\varepsilon) B_{T,a_1}(c)+(1-\nu)(1-\varepsilon) B_{T,L}(c)+2\nu\sqrt{\varepsilon(1-\varepsilon)}D_{T,a_1}(c))} \nonumber \\
& =& \nu \varepsilon A_{T,a_1}+\inf_{(c_j)\in \Lambda_{\N^*\setminus J_1}} \nu(1-\varepsilon) B_{T,a_1}(c)+(1-\nu)(1-\varepsilon) B_{T,L}(c)+2\nu\sqrt{\varepsilon(1-\varepsilon)}D_{T,a_1}(c)\nonumber \\
 &\geq & \displaystyle  \nu \varepsilon A_{T,a_1}+ \inf_{(c_j)\in \Lambda_{\N^*\setminus J_1}} \Big( (1-\nu)(1-\varepsilon) B_{T,L}(c)+2\nu\sqrt{\varepsilon(1-\varepsilon)}D_{T,a_1}(c)\Big) \nonumber \\
 &\geq &  \nu \varepsilon A_{T,a_1}+(1-\nu)(1-\varepsilon)\frac{L\gamma_{p_0}(T)}{2}\nonumber \\
 && +\inf_{(c_j)\in \Lambda_{\N^*\setminus J_1}} \left( \frac{(1-\nu)(1-\varepsilon)}{2} B_{T,L}(c)+2\nu\sqrt{\varepsilon(1-\varepsilon)}D_{T,a_1}(c) \right)
 \label{m1108}
\end{eqnarray}
by using that 
$$
B_{T,L}(c)=L \sum_{j\in \N^*\backslash J_1}|c_j|^2\int_0^T e^{2\Re\lambda _jt}\, dt=L  \sum_{j=p_0}^{+\infty}\gamma_j(T)|c_j|^2
$$
and that 
$\sum_{j=p_0}^{+\infty}\gamma_j(T)|c_j|^2\geq \gamma_{p_0}(T)\sum_{j=p_0}^{+\infty}|c_j|^2=\gamma_{p_0}(T)$.

\begin{lemma}\label{lem:m1202}
Let $T>0$ and $\mu\in (0,1)$ be given. We define 
\begin{equation}\label{m2042}
\varepsilon_{\nu,T,L}=\frac{L^2(1-\nu)^2\gamma_{p_0}(T)}{16\nu^2\gamma_1(T)+L^2(1-\nu)^2\gamma_{p_0}(T)} \in (0,1) .
\end{equation}
For every $\varepsilon \in [0,\varepsilon_{\nu,T,L}]$, we have
$$
\inf_{(c_j)\in \Lambda_{\N^*\setminus J_1}} \Big( \frac{(1-\nu)(1-\varepsilon)}{2} B_{T,L}(c)+2\nu\sqrt{\varepsilon(1-\varepsilon)}D_{T,a_1}(c) \Big) \geq 0.
$$
%
\end{lemma}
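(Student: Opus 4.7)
The plan is to bound the indefinite cross term $D_{T,a_1}(c)$ by Cauchy--Schwarz, then verify that, provided $\varepsilon \leq \varepsilon_{\nu,T,L}$, its contribution is dominated by the nonnegative term $\tfrac{(1-\nu)(1-\varepsilon)}{2}B_{T,L}(c)$. The lower bound $B_{T,L}(c)\geq L\gamma_{p_0}(T)$ already used in \eqref{m1108} is what converts the Cauchy--Schwarz estimate into the explicit threshold \eqref{m2042}.

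First, I would write $D_{T,a_1}(c)=\Re\int_0^T\!\int_\Omega a_1(x)\,f(t,x)\,\overline{g(t,x)}\,dx\,dt$ with
$$f(t,x)=e^{\lambda_1 t}\sum_{k\in J_1}c_k\phi_k(x)\quad\text{and}\quad g(t,x)=\sum_{j\in\N^*\setminus J_1}c_j e^{\lambda_j t}\phi_j(x).$$
The Cauchy--Schwarz inequality with respect to the nonnegative measure $a_1(x)\,dx\,dt$ then gives
$$|D_{T,a_1}(c)|^2\leq \Bigl(\int_0^T\!\!\int_\Omega a_1|f|^2\,dx\,dt\Bigr)\,B_{T,a_1}(c).$$
Since the common scalar $e^{\lambda_1 t}$ pulls out of the $J_1$-sum, $|f(t,x)|^2=e^{2\Re\lambda_1 t}\bigl|\sum_{k\in J_1}c_k\phi_k(x)\bigr|^2$, so using $a_1\leq 1$ together with the orthonormality of $(\phi_j)_{j\in J_1}$ yields $\int_0^T\!\int_\Omega a_1|f|^2\leq \gamma_1(T)\sum_{k\in J_1}|c_k|^2=\gamma_1(T)$. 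The same pointwise bound $a_1\leq 1$, combined with the $L^2$-orthonormality of $(\phi_j)$, also gives $B_{T,a_1}(c)\leq \sum_{j\in\N^*\setminus J_1}\gamma_j(T)|c_j|^2=B_{T,L}(c)/L$. Altogether,
$$|D_{T,a_1}(c)|\leq \sqrt{\gamma_1(T)\,B_{T,L}(c)/L}.$$

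To conclude, using $B_{T,L}(c)\geq L\gamma_{p_0}(T)>0$ I would rewrite the target nonnegativity as
$$\tfrac{(1-\nu)(1-\varepsilon)}{2}B_{T,L}(c)\geq 2\nu\sqrt{\varepsilon(1-\varepsilon)}\sqrt{\gamma_1(T)B_{T,L}(c)/L},$$
divide by $\sqrt{B_{T,L}(c)}>0$, square, and use $B_{T,L}(c)\geq L\gamma_{p_0}(T)$ in the remaining linear factor. The inequality then reduces to $L^2(1-\nu)^2(1-\varepsilon)\gamma_{p_0}(T)\geq 16\nu^2\varepsilon\gamma_1(T)$, equivalently $\varepsilon/(1-\varepsilon)\leq L^2(1-\nu)^2\gamma_{p_0}(T)/(16\nu^2\gamma_1(T))$, which is exactly $\varepsilon\leq\varepsilon_{\nu,T,L}$ as defined in \eqref{m2042}.

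The only mild obstacle is keeping careful track of the complex structure of $D_{T,a_1}(c)$: what makes Cauchy--Schwarz yield a clean bound is that all $J_1$ modes share the same exponential $e^{\lambda_1 t}$, whose modulus factors out as $e^{\Re\lambda_1 t}$ and reduces the first Cauchy--Schwarz factor to $\gamma_1(T)$ times a unit-norm coefficient sum. After this observation, the remainder is a direct algebraic rearrangement matching the explicit form of $\varepsilon_{\nu,T,L}$.
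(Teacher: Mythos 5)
Your proposal is correct and follows essentially the same route as the paper: bound $|D_{T,a_1}(c)|$ by Cauchy--Schwarz using $0\leq a_1\leq 1$ and orthonormality to get $|D_{T,a_1}(c)|^2\leq \gamma_1(T)\sum_{j\geq p_0}\gamma_j(T)|c_j|^2=\gamma_1(T)B_{T,L}(c)/L$, then use $B_{T,L}(c)\geq L\gamma_{p_0}(T)$ and the same algebraic rearrangement to recover the threshold $\varepsilon_{\nu,T,L}$ of \eqref{m2042}. The only cosmetic difference is how the weight $a_1$ is split inside Cauchy--Schwarz (you use the measure $a_1\,dx\,dt$ on both factors, the paper puts $a_1^2$ on one factor), which leads to the identical final estimate.
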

\begin{proof}[Proof of Lemma~\ref{lem:m1202}.]
 By the Cauchy-Schwarz inequality, we have
\begin{eqnarray*}
|D_{T,a_1}(c)|^2 & =& \bigg|\Re \int_\Omega\!\int_0^T a_1(x)\sum_{k=1}^{p_0-1}c_ke^{\lambda_1t}\phi_k(x)\overline{\sum_{j=p_0}^{+\infty}c_je^{\lambda_jt}\phi_j(x)}\, dt\,dx\bigg|^2\\
&\leq & \int_\Omega\!\int_0^T a_1(x)^2\bigg|\sum_{k=1}^{p_0-1}c_ke^{\lambda_1t}\phi_k(x)\bigg|^2\, dt\,dx\int_\Omega\!\int_0^T \bigg|\sum_{j=p_0}^{+\infty}c_je^{\lambda_jt}\phi_j(x)\bigg|^2\, dt\,dx\\
&\leq & \sum_{k=1}^{p_0-1}\gamma_1(T)|c_k|^2\sum_{j=p_0}^{+\infty}\gamma_j(T)|c_j|^2=\gamma_1(T)\sum_{j=p_0}^{+\infty}\gamma_j(T)|c_j|^2.
\end{eqnarray*}
To go from the second to the third line we used the fact that $0\leq a_1(\cdot)\leq 1$.
Therefore,
$$
\frac{(1-\nu)(1-\varepsilon)}{2} B_{T,L}(c)+2\nu\sqrt{\varepsilon(1-\varepsilon)}D_{T,a_1}(c) 
\geq \frac{(1-\nu)(1-\varepsilon)}{2} LX_T(c)^2-2\nu\sqrt{\varepsilon(1-\varepsilon)}\sqrt{\gamma_1(T)}X_T(c)
$$
where 
$$
X_T(c)=\bigg(\sum_{j=p_0}^{+\infty}\gamma_j(T)|c_j|^2\bigg)^{1/2}.
$$
Since $X_T(c)\geq \sqrt{\gamma_{p_0}(T)}$, the expected conclusion follows if 
$$
\sqrt{\gamma_{p_0}(T)}\geq \frac{4\nu \sqrt{\varepsilon}\sqrt{\gamma_1(T)}}{L(1-\nu)\sqrt{1-\varepsilon}},
$$
which is satisfied for any $\varepsilon\in [0,\varepsilon_{\nu,T,L}]$ where $\varepsilon_{\nu,T,L}$ is defined by \eqref{m2042}.
\end{proof}

\begin{lemma}\label{Le:LE}
One has
\begin{equation}\label{m1217}
\bar\delta_T\geq \inf_{\varepsilon\in [0,1]}\psi_{T,\nu,L}(\varepsilon),
\end{equation}
where 
\begin{equation}\label{m1217bis}
\psi_{T,\nu,L}(\varepsilon)=\left\{\begin{array}{ll}
\min \left(\nu A_{T,a_1},(1-\nu)\frac{L\gamma_{p_0}(T)}{2}\right) & \text{if }\varepsilon \in [0,\varepsilon_{\nu,T,L}]\\
(1-\nu)(1-\varepsilon)\gamma_{p_0}(T) & \text{if }\varepsilon \in (\varepsilon_{\nu,T,L},1].
\end{array}\right.
\end{equation}
\end{lemma}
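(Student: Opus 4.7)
The plan is to bound $\bar\delta_T$ from below piecewise in $\varepsilon$: use \eqref{m1108} together with Lemma~\ref{lem:m1202} on $[0,\varepsilon_{\nu,T,L}]$, and derive a separate bound on $(\varepsilon_{\nu,T,L},1]$ via a Cauchy--Schwarz/AM--GM argument that re-opens the splitting done in \eqref{m1108}.

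On $[0,\varepsilon_{\nu,T,L}]$, Lemma~\ref{lem:m1202} makes the residual infimum in \eqref{m1108} nonnegative, so
$$\bar\delta_T \geq \nu\varepsilon A_{T,a_1}+(1-\nu)(1-\varepsilon)\frac{L\gamma_{p_0}(T)}{2}.$$
Being an affine function of $\varepsilon\in[0,1]$ with endpoint values $(1-\nu)L\gamma_{p_0}(T)/2$ and $\nu A_{T,a_1}$, the right-hand side is bounded below on the whole interval by the minimum of the two, which is precisely the first branch of $\psi_{T,\nu,L}$.

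On $(\varepsilon_{\nu,T,L},1]$, Lemma~\ref{lem:m1202} no longer applies, since the cross term $D_{T,a_1}$ may outweigh the retained half of $B_{T,L}$ in \eqref{m1108}. I would therefore return to the un-split lower bound
$$\bar\delta_T\geq\inf_{c}\bigl(\nu\varepsilon A_{T,a_1}(c)+\nu(1-\varepsilon) B_{T,a_1}(c)+(1-\nu)(1-\varepsilon) B_{T,L}(c)+2\nu\sqrt{\varepsilon(1-\varepsilon)}\,D_{T,a_1}(c)\bigr),$$
which follows from $\bar\delta_T\geq C_T(a_\nu)$, the decomposition $a_\nu=\nu a_1+(1-\nu)L$, the orthogonality identity $D_{T,L}\equiv 0$, and discarding the nonnegative $(1-\nu)\varepsilon L\gamma_1(T)$ contribution from $A_{T,a_\nu}$. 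The key ingredient is now Cauchy--Schwarz in the sharper form $|D_{T,a_1}(c)|\leq \sqrt{A_{T,a_1}(c)\,B_{T,a_1}(c)}$, obtained by integrating against the measure $a_1\,dt\,dx$ on both factors, followed by AM--GM with $u=\nu\varepsilon A_{T,a_1}(c)$ and $v=\nu(1-\varepsilon) B_{T,a_1}(c)$. The cross term is then absorbed, the $\nu$-contributions cancel exactly, and the residual $(1-\nu)(1-\varepsilon) B_{T,L}(c)\geq (1-\nu)(1-\varepsilon)L\gamma_{p_0}(T)$ delivers the desired lower bound of the second branch. Taking the infimum over $\varepsilon\in[0,1]$ of the resulting piecewise bound then yields \eqref{m1217}.

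The one delicate point is the pairing choice in the AM--GM step of Case~2: only by balancing the $\nu$-factors between $A_{T,a_1}$ and $B_{T,a_1}$ does the $a_1$-contribution get absorbed exactly by the cross term, leaving the clean contribution of the constant part $L$ in $a_\nu$ untouched. This is precisely the ``quasi-maximizer'' mechanism described in Section~\ref{sec_comments_proof}; without it the large-$\varepsilon$ regime would appear uncontrolled, as the heuristic computation in that section warns.
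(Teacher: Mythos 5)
Your proof is correct and follows essentially the same route as the paper: the same reduction to $C_T(a_\nu)$ with $a_\nu=\nu a_1+(1-\nu)L$, the same case split at $\varepsilon_{\nu,T,L}$, and the same use of Lemma~\ref{lem:m1202} plus the affine-in-$\varepsilon$ endpoint minimum on $[0,\varepsilon_{\nu,T,L}]$. The only divergence is on $(\varepsilon_{\nu,T,L},1]$: where you absorb the cross term via $|D_{T,a_1}(c)|\leq\sqrt{A_{T,a_1}(c)\,B_{T,a_1}(c)}$ followed by AM--GM with $u=\nu\varepsilon A_{T,a_1}(c)$, $v=\nu(1-\varepsilon)B_{T,a_1}(c)$ (leaving a nonnegative remainder $(\sqrt{u}-\sqrt{v})^2$ rather than an exact cancellation), the paper simply observes that $\nu\varepsilon A_{T,a_1}(c)+\nu(1-\varepsilon)B_{T,a_1}(c)+2\nu\sqrt{\varepsilon(1-\varepsilon)}D_{T,a_1}(c)=\nu E_{T,a_1}(b)\geq 0$, being $\nu$ times an integral of $a_1|y|^2$; the two arguments establish the identical nonnegativity, yours just re-proving it by hand. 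One point worth recording: both derivations actually yield $(1-\nu)(1-\varepsilon)L\gamma_{p_0}(T)$ on the second branch, as you state, so the factor $L$ is missing from the second line of \eqref{m1217bis} as printed --- an inconsequential typo for the subsequent asymptotics, where only the growth of $(1-\nu_T)(1-\varepsilon_{\nu_T,T,L})\gamma_{p_0}(T)/\gamma_1(T)$ matters.
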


\begin{proof}[Proof of Lemma \ref{Le:LE}]
We combine Lemma~\ref{lem:m1202} and \eqref{m1108}: first, observe that for any $\varepsilon\in [0,1]$ 
$$
\nu \varepsilon A_{T,a_1}+(1-\nu)(1-\varepsilon)\frac{L\gamma_{p_0}(T)}{2}\geq \min \left(\nu A_{T,a_1},(1-\nu)\frac{L\gamma_{p_0}(T)}{2} \right) .
$$
Second,  the expression of $\psi_{T,\nu,L}$ on $(\varepsilon_{\nu,T,L},1]$ follows from \eqref{m1122}, using that $E_{T,a_1}(b)\geq 0$ for all $b\in \ell^2(\R)$ and the fact that
$$
\nu \varepsilon A_{T,a_1}+\nu(1-\varepsilon) B_{T,a_1}(c)+(1-\nu)(1-\varepsilon) B_{T,L}(c)+2\nu\sqrt{\varepsilon(1-\varepsilon)}D_{T,a_1}(c)\geq (1-\nu)(1-\varepsilon) B_{T,L}(c)
$$
for all $c\in \ell^2(\R)$.
\end{proof}

The end of the proof consists in choosing $\nu$ and $\varepsilon$ adequately to compute $\inf_{\varepsilon\in [0;1]} \psi_{T,\nu,L}(\varepsilon).$
\begin{lemma}\label{lem:nuTdef}
There exists a family $(\nu_T)_{T>0}$ converging to 1 as $T\to +\infty$ such that $\nu_T\in (0,1)$ for every $T>0$, and
$$
\lim_{T\to +\infty}\varepsilon_{\nu_T,T,L}=1,\qquad \lim_{T\to +\infty }\frac{(1-\nu_T)\gamma_{p_0}(T)}{\gamma_1(T)}=+\infty, \qquad \lim_{T\to +\infty }\frac{(1-\nu_T)(1-\varepsilon_{\nu_T,T,L})\gamma_{p_0}(T)}{\gamma_1(T)}=+\infty. 
$$
\end{lemma}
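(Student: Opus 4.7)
The plan is to define $\nu_T$ as an explicit power of the ratio $\rho(T) := \gamma_{p_0}(T)/\gamma_1(T)$. The first preliminary step is to establish that $\rho(T) \to +\infty$ as $T \to +\infty$: writing $\gamma_j(T)=\int_0^T e^{2\Re(\lambda_j)t}\,dt$ and invoking the spectral gap $\Re(\lambda_{p_0}) > \Re(\lambda_1)$ from $\Hun$, a short case analysis on the sign of $\Re(\lambda_1)$ gives the divergence of $\rho$ (exponentially fast in the generic parabolic case $\Re(\lambda_1)>0$).

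Given this, I would set $\nu_T = 1 - \rho(T)^{-1/3}$ once $T$ is large enough that $\rho(T) > 1$, and extend arbitrarily inside $(0,1)$ for small $T$. Then $\nu_T \in (0,1)$ and $\nu_T \to 1$, and the three limits reduce to elementary algebra. Writing $\varepsilon_{\nu_T,T,L}=A_T/(A_T+B_T)$ with $A_T=L^2(1-\nu_T)^2\gamma_{p_0}(T)$ and $B_T=16\nu_T^2\gamma_1(T)$, one computes
$$
\frac{A_T}{B_T} = \frac{L^2 \rho(T)^{1/3}}{16\nu_T^2}\longrightarrow +\infty,
$$
proving $\varepsilon_{\nu_T,T,L}\to 1$. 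The second limit is the direct identity $(1-\nu_T)\rho(T)=\rho(T)^{2/3}\to+\infty$. For the third, using $1-\varepsilon_{\nu_T,T,L}=B_T/(A_T+B_T) \sim B_T/A_T$ (by the first limit) gives
$$
(1-\nu_T)\bigl(1-\varepsilon_{\nu_T,T,L}\bigr)\rho(T) \sim \frac{(1-\nu_T)B_T}{A_T}\,\rho(T) = \frac{16\nu_T^2}{L^2(1-\nu_T)} \longrightarrow +\infty
$$
since $\nu_T\to 1$.

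The only subtle point, which is a design choice rather than a genuine obstacle, lies in picking the correct exponent: one needs $(1-\nu_T)^2\rho(T)\to+\infty$ (forcing the exponent to be strictly below $1/2$, so that $\varepsilon_{\nu_T,T,L}\to 1$), together with $\nu_T\to 1$ (forcing the exponent to be strictly positive, so that $16\nu_T^2/[L^2(1-\nu_T)]$ diverges in the third limit). The value $1/3$ sits comfortably in the open interval $(0,1/2)$, but any exponent there would work equally well; the freedom in this choice is what makes the plan robust.
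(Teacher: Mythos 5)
Your proof is correct and follows essentially the same route as the paper: both choose $1-\nu_T$ to decay like a power $\alpha\in(0,1/2)$ of $\gamma_1(T)/\gamma_{p_0}(T)$ (the paper's parametrization $1-\nu_T=\gamma_1(T)e^{\eta T}/\gamma_{p_0}(T)$ with $\eta\in\left(\Re(\lambda_{p_0}-\lambda_1),2\Re(\lambda_{p_0}-\lambda_1)\right)$ is exactly your family with $\alpha=1-\eta/(2\Re(\lambda_{p_0}-\lambda_1))$), and the three limits are checked by the same algebra. One caveat on your closing remark that ``any exponent there would work equally well'': this is true for the Lemma itself but not for its use downstream, since the rate in \eqref{Eq:HP6} comes from $\Vert a_T^\star-a_1\Vert_{L^1(\Omega)}=\operatorname{O}\bigl(\sqrt{1-\nu_T}\bigr)$, so a fixed $\alpha=1/3$ only yields $\operatorname{O}\bigl(e^{-\Re(\lambda_{p_0}-\lambda_1)T/3}\bigr)$, and one must let $\alpha\uparrow 1/2$ (the paper's $\eta\downarrow\Re(\lambda_{p_0}-\lambda_1)$) to reach the stated $e^{-(1-\xi)\Re(\lambda_{p_0}-\lambda_1)T/2}$.
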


\begin{proof}
Let $(\nu_T)_{T>0}$ be a family of elements in $[0,1]$ converging to 1 as $T\to +\infty$. Let $\theta_T=1-\nu_T$. Since $\gamma_{p_0}(T)/\gamma_1(T)\sim e^{2T\Re(\lambda_{p_0}-\lambda_1)}$ as $T\to +\infty$, let us set $\theta_T =\gamma_1(T))e^{\eta T}/\gamma_{p_0}(T)$, where $\eta\in \R_+^*$ will be chosen appropriately. Note first that
$$
1-\nu_T=\theta_T\sim e^{(\eta+2\Re(\lambda_1-\lambda_{p_0}))T}\quad\text{as }T\to +\infty\quad  \text{and}\quad \frac{(1-\nu_T)\gamma_{p_0}(T)}{\gamma_1(T)}=e^{\eta T} .
$$
In particular, this indicates that we should choose $\eta \in(0,2\Re(\lambda_{p_0}-\lambda_1))$.

Regarding the last equality of the lemma, note that
$$
1-\varepsilon_{\nu_T,T,L}=\frac{16\nu_T^2\gamma_1(T)}{16\nu^2\gamma_1(T)+L^2(1-\nu_T)^2\gamma_{p_0}(T)}
$$
and thus
$$
(1-\nu_T)(1-\varepsilon_{\nu_T,T,L})\frac{\gamma_{p_0}(T)}{\gamma_1(T)} = \frac{16\nu_T^2(1-\nu_T)\gamma_{p_0}(T)}{16\nu_T^2\gamma_1(T)+L^2(1-\nu_T)^2\gamma_{p_0}(T)}
= \frac{e^{\eta T}}{1+\frac{L^2e^{2\eta T}}{16\nu_T^2}\frac{\gamma_1(T)}{\gamma_{p_0}(T)}} .
$$
Since
$$
e^{2\eta T}\frac{\gamma_1(T)}{\gamma_{p_0}(T)}=\operatorname{O}\left(e^{2T(\eta +\lambda_1-\lambda_{p_0})}\right),
$$
choosing any $\eta$ such that $\eta \in \left(\Re(\lambda_{p_0}-\lambda_1),2\Re(\lambda_{p_0}-\lambda_1)\right)$ yields the desired conclusion. Observe moreover that, with this choice, we have
$$
\varepsilon_{\nu_T,T,L}=\frac{L^2(1-\nu_T)^2\gamma_{p_0}(T)}{L^2(1-\nu_T)^2\gamma_{p_0}(T)+16\nu_T^2\gamma_1(T)}=\frac{L^2e^{2\eta T}\gamma_1(T)/\gamma_{p_0}(T)}{16\nu_T^2+L^2e^{2\eta T}\gamma_1(T)/\gamma_{p_0}(T)} 
= 1+\operatorname{O}\big(e^{-2T(\eta-\Re(\lambda_{p_0}-\lambda_1))}\big)  
$$
since $\eta+\Re(\lambda_1-\lambda_{p_0})>0$.
\end{proof}
\paragraph{Conclusion: asymptotics of $\bar\delta_T$.}
Let us consider $\nu=\nu_T$ in \eqref{m1217}, where $(\nu_T)_{T>0}$ denotes the family constructed in Lemma~\ref{lem:nuTdef}. One has
$$
\bar\delta_T\geq \inf_{\varepsilon\in [0,1]}\psi_{T,\nu_T,L}(\varepsilon)
$$
where $\psi_{T,\nu_T,L}$ is given by \eqref{m1217bis}. From Lemma~\ref{lem:nuTdef}, there holds
$$
\lim_{T\to +\infty}\frac{(1-\nu_T)\gamma_{p_0}(T)L}{2\gamma_1(T)}=+\infty \quad \textrm{and}\quad \lim_{T\to +\infty }\frac{(1-\nu_T)(1-\varepsilon_{\nu_T,T,L})\gamma_{p_0}(T)}{\gamma_1(T)}=+\infty.
$$
From \eqref{m1217} we infer that
\begin{equation}\label{mtgv:0818}
\frac{\bar\delta_T}{\gamma_1(T)}\geq \min \left(\nu_T \sigma_1(a_1),\frac{(1-\nu_T)\gamma_{p_0}(T)L}{2\gamma_1(T)},\frac{(1-\nu_T)(1-\varepsilon_{\nu_T,T,L})\gamma_{p_0}(T)}{\gamma_1(T)}\right).
\end{equation}
Letting $T$ go to $+\infty$ yields 
$$
\liminf_{T\to +\infty}\frac{\bar\delta_T}{\gamma_1(T)}\geq \sigma_1(a_1).
$$
Combining this inequality with \eqref{eq1212} finally gives \eqref{Eq:HP5}.

\paragraph{Quantitative estimate on $\boldsymbol{\operatorname{dist}_{L^1(\Omega)}\Big(a_T^\star,\underset{a\in \overline{\mathcal{U}}_L}{\operatorname{argmax}}~\sigma_1(a)\Big)}$.}
Since $\bar\delta_T=C_T(a_T^\star)$ and since $C_T(a_T^\star)$ is defined as an infimum, considering particular choices of coefficients $b_j$ equal to $0$ in $\N^*\backslash J_1$ gives
$$
C_T(a_T^\star)\leq \gamma_1(T)\min_{\sum_{j\in J_1}|b_j|^2}\int_\Omega a(x)\bigg|\sum_{j\in J_1}b_j\phi_j(x)\bigg|^2\, dx=\gamma_1(T)\sigma_1(a_T^\star).
$$
Now, according to \eqref{mtgv:0818}, with the particular choice of parameter $\nu_T$ given by Lemma~\ref{lem:nuTdef}, we also have
$C_T(a_T^\star)\geq \gamma_1(T)\nu_T \sigma_1(a_1)$
if $T$ is large enough, and therefore
\begin{equation*}
\sigma_1(a_T^\star)\geq \frac{C_T(a_T^\star)}{\gamma_1(T)}\geq \nu_T\sigma_1(a_1).
\end{equation*}
In particular, this implies that 
$0\leq \sigma_1(a_1)-\sigma_1(a_T)\leq(1-\nu_T)\sigma_1(a_1)$.
Using the constant $K$ given by Theorem \ref{theo:sigma1} we deduce that
\[ K\Vert a_T^\star-a_1\Vert_{L^1(\O)}^2\leq (1-\nu_T)\sigma_1(a_1).\]  By construction of $\nu_T$ we have 
\[
\nu_T\sim e^{(\eta+2\Re(\lambda_1-\lambda_{p_0}))T}\quad \text{as }T\to+\infty
\] 
for some fixed $\eta\in (\Re(\lambda_{p_0}-\lambda_1), 2\Re(\lambda_{p_0}-\lambda_1))$, whence the conclusion.

\section{Conclusion and further comments}\label{sec:concl}

In this paper, we have described the large-time behavior of the maximizers of the observability constant associated with measurements of the solution of parabolic equations on a subdomain. 

Hereafter, we show by duality that our optimal observability results can be applied as well to the optimal actuator (controllability) shape and location problem. We then give a partial result regarding the small-time asymptotics, i.e., as $T\rightarrow 0$, of the optimal observability problem. We conclude with some open questions.

\subsection{Controllability}\label{sec:biblio}
The control system
\begin{equation}\label{heatEqcontrolled_intro}
\partial_ty+A_0y=u\mathds{1}_\omega 
\end{equation}
is said to be exactly null controllable in time $T$ if every initial datum $y(0,\cdot)\in L^2(\Omega)$ can be steered to $0$ in time $T$ by means of an appropriate control function $u\in L^2((0,T)\times \Omega)$.
It is well known that controllability and observability are dual notions (see \emph{e.g.} \cite{CurtainZwart, Trelat, TucsnakWeiss, Zabczyk}), in the sense that the latter exact null controllability property is equivalent to the observability property of \eqref{heatEq} on $\omega$ in time $T$, and the observability constant $C_T(\mathds{1}_\omega)$ coincides with the inverse of the minimal $L^2$ control cost for the controllability problem for \eqref{heatEqcontrolled_intro}.
The Hilbert Uniqueness Method (HUM, see \cite{MR953547, MR931277, Trelat, Zabczyk}) provides a characterization unique control solving the exact null controllability problem with a minimal $L^2$ norm. This control is referred to as the HUM control and is characterized as follows. Define the HUM functional $\mathcal{J}_\omega$ by
\begin{equation*}
\mathcal{J}_\omega(\phi^T)=\frac{1}{2}\int_0^T\!\!\!\int_\omega\vert\phi(t,x)\vert^2 \, dx \, dt + \langle\phi^T,y^0\rangle_{L^2},
\end{equation*}
where $\phi$ is the solution of $-\partial_t \phi+A_0^*\phi=0$ such that $\phi(T,\cdot)=\phi^T$.
Under the observability assumption, the functional $\mathcal{J}_\omega$ has a unique minimizer and the HUM control $u_\omega$ steering $y^0$ to $0$ in time $T$ is $u_\omega(t,x)=\mathds{1}_\omega(x)\phi(t,x)$. 
The HUM operator $\Gamma_\omega : L^2(\Omega) \rightarrow L^2((0,T)\times\Omega)$ is defined by $\Gamma_\omega(y^0) = u_\omega$ and its operator norm is given by
$$
\Vert \Gamma_\omega\Vert = \sup \left\{\frac{\Vert u_\omega\Vert_{L^2((0,T)\times\Omega)}}{\Vert y^0\Vert_{L^2(\Omega)}}\mid y^0 \in L^2(\Omega) \setminus\{0)\} \right\}.
$$
Adopting the convention $1/0=+\infty$, we have
$$
\Vert \Gamma_\omega\Vert = \frac{1}{C_T(\mathds{1}_\omega)}.
$$
We refer to \cite{MR3132418} for details.
Thus
$$
\inf_{\mathds{1}_\omega\in\mathcal{U}_L}\Vert \Gamma_\omega\Vert =\Big(\displaystyle \sup_{\mathds{1}_\omega\in\mathcal{U}_L}C_T(\mathds{1}_\omega)\Big)^{-1} .
$$
Hence, the problem of maximizing the observability constant is equivalent to the problem of minimizing the operator norm of $\Gamma_\omega$, which models the determination of a best control domain in some sense.

\subsection{Small-time asymptotics}
In this section, ,we mention an issue similar to the one investigated in this article, which cannot be treated using the same tools as in this paper: the limit of maximizers in small time, as $T\rightarrow 0$.
We have the following partial result. It would be relevant to study how to obtain quantitative estimates on the convergence of $\bar\delta_T/T$, as well as on the convergence of the maximizers.

\begin{theorem}\label{theo:smallT}
Assume that $q=1$ and that $A_0$ is the second-order elliptic differential operator
$$
A_0 = -\sum_{j=1}^d \partial_{x_j} (g_{jk} \partial_{x_k} ) + \sum_{j=1}^d b_j \partial_{x_j} + c
$$
where the functions $g_{jk}$, $b_j$ and $c$ are smooth on $\overline{\Omega}$. We assume that the matrix $G=(g_{jk})_{1\leq j,k\leq d}$ is real and positive definite in $\overline{\Omega}$, that $\det(G)=1$ on $\Omega$ and that $A_0$ is symmetric for the Lebesgue measure on $\Omega$.
Then 
$$
\bar\delta_T \sim LT\quad\textrm{as}\quad T\to 0 .
$$
\end{theorem}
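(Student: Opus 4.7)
My plan is to prove $\bar\delta_T/T\to L$ as $T\to 0$ via matching lower and upper bounds. For the \emph{lower bound}, I test the constant density $a\equiv L\in\overline{\mathcal{U}}_L$. Since $A_0$ is symmetric for the Lebesgue measure, the eigenfunctions $(\phi_j)_{j\in\N^*}$ form a real orthonormal basis and the cross terms in $\int_\Omega L\,\bigl|\sum_j b_j e^{\lambda_j t}\phi_j\bigr|^2\,dx$ vanish by orthogonality; a direct computation using \eqref{defCTdeterministic} gives $C_T(L)=L\inf_{j\in\N^*}\gamma_j(T)=L\gamma_1(T)$, the second equality following from the fact that $\lambda\mapsto\int_0^T e^{2\lambda s}\,ds$ is strictly increasing on $\R$ and $\lambda_1$ is the smallest eigenvalue. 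Since $\gamma_1(T)/T\to 1$ as $T\to 0$, this yields $\liminf_{T\to 0}\bar\delta_T/T\geq L$.

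For the \emph{upper bound}, I would test $C_T(a)$, for an arbitrary $a\in\overline{\mathcal{U}}_L$, against single-eigenfunction initial data. Taking $b_j=1$ and $b_k=0$ for $k\neq j$ in \eqref{defCTdeterministic} gives the bound $C_T(a)\leq\gamma_j(T)\int_\Omega a\phi_j^2\,dx$ for every $j\in\N^*$. Fix a threshold $\Lambda>0$. Since $\lambda\mapsto \int_0^T e^{2\lambda s}\,ds$ is increasing on $\R$ and the minimum over $\{j:\lambda_j\leq\Lambda\}$ is dominated by the corresponding average, I obtain
\[
C_T(a)\;\leq\;\frac{e^{2\Lambda T}-1}{2\Lambda}\int_\Omega a(x)\,\frac{K_\Lambda(x,x)}{N(\Lambda)}\,dx,
\]
where $K_\Lambda(x,x):=\sum_{\lambda_j\leq\Lambda}\phi_j(x)^2$ is the diagonal of the spectral projector and $N(\Lambda):=\#\{j\in\N^*:\lambda_j\leq\Lambda\}$.

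The technical heart of the argument is the local Weyl law. The hypotheses on $A_0$ -- symmetry for the Lebesgue measure and principal symbol $\sum g_{jk}(x)\xi_j\xi_k$ with $\det G\equiv 1$ on $\overline{\Omega}$ -- are precisely those that make the leading term of H\"ormander's pointwise Weyl law $x$-independent: the sublevel set $\{\xi:\sum g_{jk}(x)\xi_j\xi_k\leq\Lambda\}$ has volume $\omega_d\Lambda^{d/2}/\sqrt{\det G(x)}=\omega_d\Lambda^{d/2}$, with $\omega_d$ the volume of the Euclidean unit ball, so that
\[
K_\Lambda(x,x)=c_0\Lambda^{d/2}+O(\Lambda^{(d-1)/2})\quad\text{and}\quad N(\Lambda)=c_0|\Omega|\Lambda^{d/2}+O(\Lambda^{(d-1)/2}),\qquad c_0:=\omega_d/(2\pi)^d,
\]
uniformly on compact subsets of $\Omega$. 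A routine boundary-layer estimate (the tubular neighborhood of $\partial\Omega$ of width $O(\Lambda^{-1/2})$ contributes $O(\Lambda^{-1/2})$ to the $L^1$ norm of $K_\Lambda/N(\Lambda)-1/|\Omega|$) then gives $K_\Lambda/N(\Lambda)\to 1/|\Omega|$ in $L^1(\Omega)$. Since $\|a\|_\infty\leq 1$, this produces a quantity $\varepsilon(\Lambda)\to 0$ as $\Lambda\to\infty$ such that $\int_\Omega a\,K_\Lambda/N(\Lambda)\,dx\leq L+\varepsilon(\Lambda)$ uniformly in $a\in\overline{\mathcal{U}}_L$.

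To conclude, choose $\Lambda=\Lambda(T)\to\infty$ with $\Lambda(T)T\to 0$ as $T\to 0$ (e.g.\ $\Lambda(T)=T^{-1/2}$). Then $(e^{2\Lambda(T)T}-1)/(2\Lambda(T)T)\to 1$, and the preceding bounds combine to give $C_T(a)/T\leq L+o(1)$ uniformly in $a$, hence $\limsup_{T\to 0}\bar\delta_T/T\leq L$, matching the lower bound. The hard step is justifying the \emph{uniform-in-$a$} local Weyl asymptotic: if the leading Weyl coefficient were a non-constant function $c(x)$, the limit of the average would be $\int_\Omega a\,c\,dx$, whose supremum over $\overline{\mathcal{U}}_L$ (attained by a bathtub-type concentration of $a$ on the maxima of $c$) would strictly exceed $L$, and the equivalent $\bar\delta_T\sim LT$ would fail. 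The hypothesis $\det G\equiv 1$, combined with the Lebesgue symmetry of $A_0$, is precisely what rules this out.
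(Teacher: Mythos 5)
Your proof is correct and follows essentially the same route as the paper's: the lower bound via $C_T(L)=L\gamma_1(T)$, and the upper bound by testing single modes, averaging over low frequencies, and invoking H\"ormander's local Weyl law (for which the hypotheses $\det G\equiv 1$ and Lebesgue-symmetry make the leading density constant) to conclude that the averaged spectral function converges to $1/|\Omega|$. The only differences are organizational: the paper averages the first $N$ modes with weights proportional to $\gamma_j(T)^{-1}$ and sends $T\to 0$ before $N\to\infty$, whereas you threshold at $\lambda_j\leq\Lambda(T)$ with $\Lambda(T)T\to 0$; your variant has the minor merit of making the upper bound explicitly uniform in $a\in\overline{\mathcal{U}}_L$, which is what the supremum defining $\bar\delta_T$ actually requires and which the paper leaves implicit in its inequality \eqref{ineq18:07}.
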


\begin{proof}
Using \eqref{defCTdeterministic} and the fact that $(\phi_j)_{j\geq 1}$ is a Hilbert basis, we first note that, taking $a\equiv L$,
$$
C_T(L) = \inf_{\sum_{j=1}^{+\infty}|b_j|^2=1}\ \  \int_0^T\!  \int_\Omega L\bigg|\sum_{j=1}^{+\infty}b_je^{\lambda_j t}\phi_j(x)  \bigg|^2 dx \, dt= L\inf_{j\in\N^*}\gamma_j(T) = L\gamma_1(T).
$$
Since $a^\star_T$ is a maximizer, we have $\bar\delta_T = C_T(a^\star_T)\geq C_T(L)$ and thus
$$
\liminf_{T\to 0}\frac{\bar\delta_T}{T} \geq  L \liminf_{T\to 0} \frac{\gamma_1( T)}{T} = L.
$$
Let us prove the converse inequality. Actually, let us prove the stronger fact that
$$
\limsup_{T\to 0}\frac{C_T(a)}{T} \leq  L \qquad \forall a\in\overline{\mathcal{U}}_L .
$$
Using that $C_T(a)$ is defined by \eqref{defCTdeterministic} as an infimum, taking $b_j=1$ and all other $b_k$ equal to $0$, we have in particular
$$
C_T(a) \leq \int_0^T \int_\Omega a(x) e^{\Re\lambda_j t}\vert\phi_j(x)\vert^2\, dx\, dt = \gamma_j(T) \int_\Omega a\vert\phi_j\vert^2\qquad \forall j\in\N^*
$$
and thus, for every $N\in\N^*$, 
$$
C_T(a) \leq \inf \bigg\{ \sum_{j=1}^N \alpha_j \gamma_j(T) \int_\Omega a\vert\phi_j\vert^2 \ \ \Big\vert\ \   \alpha_1,\ldots,\alpha_N\geq 0, \ \sum_{j=1}^N\alpha_j=1 \bigg\} .
$$
Choosing in particular $\alpha_j = \gamma_j(T)^{-1} / \sum_{k=1}^N \gamma_k(T)^{-1}$ for every $j\in\{1,\ldots,N\}$, we infer that
\begin{equation}\label{ineq18:07}
\frac{1}{N}\sum_{j=1}^N \frac{1}{\gamma_j(T)} C_T(a) \leq \int_\Omega a \frac{1}{N}\sum_{j=1}^N \vert\phi_j\vert^2
\end{equation}
The inequality \eqref{ineq18:07} is valid for every $a\in\overline{\mathcal{U}}_L$, every $N\in\N^*$ and every $T>0$. Fixing $N$ and letting $T$ tend to $0$, using that $\gamma_j(T)\to T$, we obtain
$$
\limsup_{T\to 0}\frac{C_T(a)}{T} \leq \int_\Omega a \frac{1}{N}\sum_{j=1}^N \vert\phi_j\vert^2 \qquad\forall N\in\N^*.
$$
To conclude, we use the fact that the Ces\`aro mean $\frac{1}{N}\sum_{j=1}^N\vert\phi_j\vert^2$ converges to the constant function $\frac{1}{\vert\Omega\vert}$ as $N\rightarrow+\infty$, 
uniformly on any compact subset of the open set $\Omega$ for the $C^0$ topology and thus weakly in $L^2(\Omega)$ (this follows from \cite[Section 17.5, Theorem 17.5.7 and Corollary 17.5.8]{MR781536}).
The expected result follows. Note that a similar argument was used in \cite[Lemma 1]{MR3500831}.
\end{proof}

Theorem \ref{theo:smallT} gives the asymptotic behavior of the optimal value $\bar\delta_T$, but we do not know ho to study the behavior of the maximizers $a_T^\star$ as $T\to 0$, as we did in the case $T\to+\infty$, in particular due to the fact that we are not able to identify a limit problem as $T\to 0$. 
We let this question as an open problem. 

\subsection{Other open questions}
Other open questions are in order:
\begin{itemize}
\item[(i)] A first one is to understand whether optimal profiles $a_T^\star$ are characteristic functions for any $T>0$. This {\it bang-bang} property is usually a first step in deriving finer estimates and geometric information.
\item[(ii)] A second one is to establish quantitative estimates for the optimal observability constant: can we obtain an estimate akin to $C_T(a)-C_T(a_T^\star)\leq - K\Vert a-a_T^\star\Vert_{L^1(\O)}^2$? To answer this question, we believe that a positive answer should first be obtained regarding the bang-bang property.
\end{itemize}

\paragraph{Acknowledgment.}
We warmly thank Sylvain Ervedoza for his enlightening discussions on modal decompositions and several ideas of examples introduced in Section~\ref{sec:comments}.\\
The authors acknowledge the support of the ANR project TRECOS, grant number ANR-20-CE40-0009.  I. M-F was partially funded by a PSL Young Researcher Starting Grant 2023. 

\bibliographystyle{abbrv}
\bibliography{biblio_ObsHeat}

\begin{thebibliography}{10}

\bibitem{AllaireMunchPeriago}
G.~Allaire, A.~M\"{u}nch, and F.~Periago.
\newblock Long time behavior of a two-phase optimal design for the heat
  equation.
\newblock {\em SIAM J. Control Optim.}, 48(8):5333--5356, 2010.

\bibitem{ammar2009kalman}
F.~Ammar-Khodja, A.~Benabdallah, C.~Dupaix, and M.~Gonz{\'a}lez-Burgos.
\newblock A kalman rank condition for the localized distributed controllability
  of a class of linear parabolic systems.
\newblock {\em Journal of Evolution Equations}, 9:267--291, 2009.

\bibitem{apraiz2014observability}
J.~Apraiz, L.~Escauriaza, G.~Wang, and C.~Zhang.
\newblock Observability inequalities and measurable sets.
\newblock {\em Journal of the European Mathematical Society},
  16(11):2433--2475, 2014.

\bibitem{bottois2021optimization}
A.~Bottois, N.~C{\^\i}ndea, and A.~M{\"u}nch.
\newblock Optimization of non-cylindrical domains for the exact null
  controllability of the 1d wave equation.
\newblock {\em ESAIM: Control, Optimisation and Calculus of Variations}, 27:13,
  2021.

\bibitem{MR2986590}
F.~Boyer and P.~Fabrie.
\newblock {\em Mathematical tools for the study of the incompressible
  {N}avier-{S}tokes equations and related models}, volume 183 of {\em Applied
  Mathematical Sciences}.
\newblock Springer, New York, 2013.

\bibitem{BrascoDePhilippis}
L.~Brasco and G.~D. Philippis.
\newblock {\em 7 Spectral inequalities in quantitative form}, pages 201--281.
\newblock De Gruyter Open Poland, Warsaw, Poland, 2017.

\bibitem{MR2759829}
H.~Brezis.
\newblock {\em Functional analysis, {S}obolev spaces and partial differential
  equations}.
\newblock Universitext. Springer, New York, 2011.

\bibitem{casas-Wachsmuth}
E.~Casas, D.~Wachsmuth, and G.~Wachsmuth.
\newblock Sufficient second-order conditions for bang-bang control problems.
\newblock {\em SIAM Journal on Control and Optimization}, 55(5):3066--3090,
  2017.

\bibitem{CanZhang}
F.~W. Chaves-Silva, D.~A. Souza, and C.~Zhang.
\newblock Observability inequalities on measurable sets for the {S}tokes system
  and applications.
\newblock {\em SIAM J. Control Optim.}, 58(4):2188--2205, 2020.

\bibitem{CianchiFerone}
A.~Cianchi and A.~Ferone.
\newblock A strengthened version of the {H}ardy-{L}ittlewood inequality.
\newblock {\em Journal of the London Mathematical Society}, 77(3), 2008.

\bibitem{CdVHT}
Y.~Colin~de Verdi\`ere, L.~Hillairet, and E.~Tr\'{e}lat.
\newblock Small-time asymptotics of hypoelliptic heat kernels near the
  diagonal, nilpotentization and related results.
\newblock {\em Ann. H. Lebesgue}, 4:897--971, 2021.

\bibitem{MR1367820}
R.~Cominetti and J.-P. Penot.
\newblock Tangent sets to unilateral convex sets.
\newblock {\em C. R. Acad. Sci. Paris S\'er. I Math.}, 321(12):1631--1636,
  1995.

\bibitem{MR1351641}
S.~J. Cox.
\newblock The generalized gradient at a multiple eigenvalue.
\newblock {\em J. Funct. Anal.}, 133(1):30--40, 1995.

\bibitem{CurtainZwart}
R.~F. Curtain and H.~Zwart.
\newblock {\em An introduction to infinite-dimensional linear systems theory},
  volume~21 of {\em Texts in Applied Mathematics}.
\newblock Springer-Verlag, New York, 1995.

\bibitem{MR969367}
R.~Dautray and J.-L. Lions.
\newblock {\em Mathematical analysis and numerical methods for science and
  technology. {V}ol. 2}.
\newblock Springer-Verlag, Berlin, 1988.

\bibitem{Puel}
E.~Fern\'{a}ndez-Cara, S.~Guerrero, O.~Y. Imanuvilov, and J.-P. Puel.
\newblock Local exact controllability of the {N}avier-{S}tokes system.
\newblock {\em J. Math. Pures Appl. (9)}, 83(12):1501--1542, 2004.

\bibitem{FernandezCaraMunch}
E.~Fern\'{a}ndez-Cara and A.~M\"{u}nch.
\newblock Numerical null controllability of semi-linear 1-{D} heat equations:
  fixed point, least squares and {N}ewton methods.
\newblock {\em Math. Control Relat. Fields}, 2(3):217--246, 2012.

\bibitem{Fursikov}
A.~V. Fursikov and O.~Y. Imanuvilov.
\newblock {\em Controllability of evolution equations}, volume~34 of {\em
  Lecture Notes Series}.
\newblock Seoul National University, Research Institute of Mathematics, Global
  Analysis Research Center, Seoul, 1996.

\bibitem{Hardt}
R.~M. Hardt.
\newblock Stratification of real analytic mappings and images.
\newblock {\em Invent. Math.}, 28:193--208, 1975.

\bibitem{Henrot-Pierre}
A.~Henrot and M.~Pierre.
\newblock {\em Shape variation and optimization}, volume~28 of {\em EMS Tracts
  in Mathematics}.
\newblock European Mathematical Society (EMS), Z\"{u}rich, 2018.
\newblock A geometrical analysis, English version of the French publication [
  MR2512810] with additions and updates.

\bibitem{Hironaka}
H.~Hironaka.
\newblock Subanalytic sets.
\newblock In {\em Number theory, algebraic geometry and commutative algebra, in
  honor of {Y}asuo {A}kizuki}, pages 453--493. 1973.

\bibitem{MR781536}
L.~H\"{o}rmander.
\newblock {\em The analysis of linear partial differential operators. {III}},
  volume 274 of {\em Grundlehren der mathematischen Wissenschaften [Fundamental
  Principles of Mathematical Sciences]}.
\newblock Springer-Verlag, Berlin, 1985.
\newblock Pseudodifferential operators.

\bibitem{MR2480708}
J.~P. Kelliher.
\newblock On the vanishing viscosity limit in a disk.
\newblock {\em Math. Ann.}, 343(3):701--726, 2009.

\bibitem{Seinfeld}
S.~Kumar and J.~Seinfeld.
\newblock Optimal location of measurements for distributed parameter
  estimation.
\newblock {\em IEEE Trans. Automat. Control}, 23(4):690--698, 1978.

\bibitem{LebeauRobbiano}
G.~Lebeau and L.~Robbiano.
\newblock Contr\^{o}le exact de l'\'{e}quation de la chaleur.
\newblock {\em Comm. Partial Differential Equations}, 20(1-2):335--356, 1995.

\bibitem{MR1906228}
D.-S. Lee and B.~Rummler.
\newblock The eigenfunctions of the {S}tokes operator in special domains.
  {III}.
\newblock {\em ZAMM Z. Angew. Math. Mech.}, 82(6):399--407, 2002.

\bibitem{Lemou}
M.~Lemou.
\newblock Extended rearrangement inequalities and applications to some
  quantitative stability results.
\newblock {\em Communications in Mathematical Physics}, 348:695--727, 2016.

\bibitem{MR953547}
J.-L. Lions.
\newblock {\em Contr\^{o}labilit\'{e} exacte, perturbations et stabilisation de
  syst\`emes distribu\'{e}s. {T}ome 1}, volume~8 of {\em Recherches en
  Math\'{e}matiques Appliqu\'{e}es [Research in Applied Mathematics]}.
\newblock Masson, Paris, 1988.
\newblock Contr\^{o}labilit\'{e} exacte. [Exact controllability], With
  appendices by E. Zuazua, C. Bardos, G. Lebeau and J. Rauch.

\bibitem{MR931277}
J.-L. Lions.
\newblock Exact controllability, stabilization and perturbations for
  distributed systems.
\newblock {\em SIAM Rev.}, 30(1):1--68, 1988.

\bibitem{lissy2019internal}
P.~Lissy and E.~Zuazua.
\newblock Internal observability for coupled systems of linear partial
  differential equations.
\newblock {\em SIAM Journal on Control and Optimization}, 57(2):832--853, 2019.

\bibitem{MRBSIAM}
I.~Mazari and D.~Ruiz-Balet.
\newblock Quantitative stability for eigenvalues of schr\"odinger operator,
  quantitative bathtub principle, and application to the turnpike property for
  a bilinear optimal control problem.
\newblock {\em SIAM Journal on Mathematical Analysis}, 54(3):3848--3883, 2022.

\bibitem{Miller}
L.~Miller.
\newblock On the controllability of anomalous diffusions generated by the
  fractional {L}aplacian.
\newblock {\em Math. Control Signals Systems}, 18(3):260--271, 2006.

\bibitem{Morris}
K.~Morris.
\newblock Linear-quadratic optimal actuator location.
\newblock {\em IEEE Trans. Automat. Control}, 56(1):113--124, 2011.

\bibitem{munch2009optimal}
A.~M{\"u}nch.
\newblock Optimal location of the support of the control for the 1-d wave
  equation: numerical investigations.
\newblock {\em Computational Optimization and Applications}, 42(3):443--470,
  2009.

\bibitem{MunchPeriago}
A.~M\"{u}nch and F.~Periago.
\newblock Optimal distribution of the internal null control for the
  one-dimensional heat equation.
\newblock {\em J. Differential Equations}, 250(1):95--111, 2011.

\bibitem{MR3132418}
Y.~Privat, E.~Tr\'{e}lat, and E.~Zuazua.
\newblock Optimal location of controllers for the one-dimensional wave
  equation.
\newblock {\em Ann. Inst. H. Poincar\'{e} C Anal. Non Lin\'{e}aire},
  30(6):1097--1126, 2013.

\bibitem{MR3048589}
Y.~Privat, E.~Tr\'{e}lat, and E.~Zuazua.
\newblock Optimal observation of the one-dimensional wave equation.
\newblock {\em J. Fourier Anal. Appl.}, 19(3):514--544, 2013.

\bibitem{MR3393270}
Y.~Privat, E.~Tr\'{e}lat, and E.~Zuazua.
\newblock Complexity and regularity of maximal energy domains for the wave
  equation with fixed initial data.
\newblock {\em Discrete Contin. Dyn. Syst.}, 35(12):6133--6153, 2015.

\bibitem{MR3325779}
Y.~Privat, E.~Tr\'{e}lat, and E.~Zuazua.
\newblock Optimal shape and location of sensors for parabolic equations with
  random initial data.
\newblock {\em Arch. Ration. Mech. Anal.}, 216(3):921--981, 2015.

\bibitem{MR3500831}
Y.~Privat, E.~Tr\'{e}lat, and E.~Zuazua.
\newblock Optimal observability of the multi-dimensional wave and
  {S}chr\"{o}dinger equations in quantum ergodic domains.
\newblock {\em J. Eur. Math. Soc. (JEMS)}, 18(5):1043--1111, 2016.

\bibitem{MR3502963}
Y.~Privat, E.~Tr\'{e}lat, and E.~Zuazua.
\newblock Randomised observation, control and stabilization of waves [{B}ased
  on the plenary lecture presented at the 86th {A}nnual {GAMM} {C}onference,
  {L}ecce, {I}taly, {M}arch 24, 2015].
\newblock {\em ZAMM Z. Angew. Math. Mech.}, 96(5):538--549, 2016.

\bibitem{MR3632257}
Y.~Privat, E.~Tr\'{e}lat, and E.~Zuazua.
\newblock Actuator design for parabolic distributed parameter systems with the
  moment method.
\newblock {\em SIAM J. Control Optim.}, 55(2):1128--1152, 2017.

\bibitem{MR3925555}
Y.~Privat, E.~Tr\'{e}lat, and E.~Zuazua.
\newblock Spectral shape optimization for the {N}eumann traces of the
  {D}irichlet-{L}aplacian eigenfunctions.
\newblock {\em Calc. Var. Partial Differential Equations}, 58(2):Paper No. 64,
  45, 2019.

\bibitem{Trelat}
E.~Tr\'elat.
\newblock {\em Control in finite and infinite dimension}.
\newblock Springerbriefs on PDEs and Data Science. Springer, 2024.

\bibitem{TucsnakWeiss}
M.~Tucsnak and G.~Weiss.
\newblock {\em Observation and control for operator semigroups}.
\newblock Birkh\"{a}user Advanced Texts: Basler Lehrb\"{u}cher. [Birkh\"{a}user
  Advanced Texts: Basel Textbooks]. Birkh\"{a}user Verlag, Basel, 2009.

\bibitem{MR2743850}
D.~Uci\'{n}ski and M.~Patan.
\newblock Sensor network design for the estimation of spatially distributed
  processes.
\newblock {\em Int. J. Appl. Math. Comput. Sci.}, 20(3):459--481, 2010.

\bibitem{MR1832938}
M.~van~de Wal and B.~de~Jager.
\newblock A review of methods for input/output selection.
\newblock {\em Automatica J. IFAC}, 37(4):487--510, 2001.

\bibitem{Zabczyk}
J.~Zabczyk.
\newblock {\em Mathematical control theory}.
\newblock Modern Birkh\"{a}user Classics. Birkh\"{a}user Boston, Inc., Boston,
  MA, 2008.
\newblock An introduction, Reprint of the 1995 edition.

\end{thebibliography}

\end{document}